\documentclass{amsart}
\usepackage[english]{babel}
\usepackage{amssymb,enumerate}

\newcommand{\mathd}{\mathrm{d}}
\newcommand{\mathe}{\mathrm{e}}
\newcommand{\tmop}[1]{{\operatorname{#1}}}
\newenvironment{enumerateroman}{\begin{enumerate}[\textup{(}i\textup{)}] }{\end{enumerate}}
\newtheorem{lemma}{Lemma}
\newtheorem{proposition}{Proposition}

\newtheorem{theorem}{Theorem}
\newenvironment{proof*}[1]{\noindent\textit{#1.\ }}{\hspace*{\fill}$\Box$\medskip}

\newcommand{\dt}{\frac{\partial}{\partial t}}
\newcommand{\normho}{| \mathring{h} |}
\newcommand{\Psum}{\underset{i, j}{\underset{\alpha > 1}{\sum}} \left( \mathring{h}^{\alpha}_{i j} \right)^2}
\newcommand{\limmax}{\underset{t \rightarrow - \infty}{\overline{\lim}}\! \max\limits_{M_t}}

\begin{document}

\title{Ancient Solution of Mean Curvature Flow in Space Forms}

\author{Li Lei}

\address{Center of Mathematical Sciences, Zhejiang University, Zhejiang Hangzhou,
310027, People's Republic of China}

\email{lei-li@zju.edu.cn}

\author{Hongwei Xu}

\address{Center of Mathematical Sciences, Zhejiang University, Zhejiang Hangzhou,
310027, People's Republic of China}

\email{xuhw@zju.edu.cn}

\author{Entao Zhao}

\address{Center of Mathematical Sciences, Zhejiang University, Zhejiang Hangzhou,
310027, People's Republic of China}
\email{zhaoet@zju.edu.cn}

\thanks{Research supported by the National Natural Science Foundation of China, Grant No.
    11531012; and the China Postdoctoral Science Foundation, Grant No. BX20180274.}

\keywords{Ancient solution, mean curvature flow of submanifolds,
pinching theorem, second fundamental form}

\subjclass[2010]{53C44, 53C40}

\begin{abstract}
In this paper we investigate the rigidity of ancient solutions of the mean curvature flow with arbitrary codimension in space forms. We first prove that under certain sharp asymptotic pointwise curvature pinching condition the ancient solution in a sphere is either a shrinking spherical cap or a totally geodesic sphere. Then we show that under certain pointwise curvature pinching condition the ancient solution in a hyperbolic space is a family of shrinking spheres. We also obtain a rigidity result for ancient solutions in a nonnegatively curved space form under an  asymptotic integral curvature pinching condition.

\end{abstract}

{\maketitle}

\section{Introduction}

Let $F:M^{n}\times(T_{1},T_{2})\rightarrow N^{n+p}$ be a smooth family
of immersions into a Riemannian manifold that satisfies
\begin{equation}
\frac{\partial}{\partial t}F(x,t)=H(x,t),\label{eq: MCF}
\end{equation}
where $H(x,t)$ is the mean curvature vector of the submanifold $M_{t}=F(M,t)$.
(\ref{eq: MCF}) is the negative gradient flow of the volume functional
on the submanifolds and $F(x,t)$ is called the solution of the mean
curvature flow.

The mean curvature flow of hypersurfaces has been investigated extensively, see for instance the fundamental papers of Huisken \cite{MR772132,Huisken-86-Invent Math,Huisken-87-MathZ} on the smooth convergence theory of the mean curvature flow. For the mean curvature flow of submanifolds with higher codimensions, fruitful results were obtained. For example, following the work of Huisken \cite{MR772132,Huisken-86-Invent Math,Huisken-87-MathZ}, Andrews and Baker \cite{AB-JDG,baker2011mean}, Liu, Xu, Ye and Zhao \cite{LXYZ,LXZ} proved several smooth convergence theorems for the mean curvature flow of submanifolds of arbitrary codimensions under pointwise curvature pinching conditions.  Motivated by the rigidity theorem for closed submanifolds with parallel mean curvature vector and the topological sphere theorem for complete submanifolds in space forms \cite{SX,Xu-PhD,Xu-Arch Math}, it was proposed in \cite{LXYZ-2,LXZ-ICCM} that the mean curvature flow of closed submanifolds satisfying  initial curvature condition  $|h|^{2}\leqslant\alpha(n,|H|,c)$ and $|H|^{2}+n^{2}c>0$  in a complete simply connected  space form $\mathbb{F}^{n+p}(c)$ with constant curvature $c\neq 0$ would converges smoothly to a round point in finite time or to a totally geodesic sphere in $\mathbb{F}^{n+p}(c)$ with $c>0$.  Here $h$ denotes the second fundamental form of the submanifold and $\alpha(n,|H|,c)=nc+\frac{n|H|^{2}}{2(n-1)}-\frac{n-2}{2(n-1)}\sqrt{|H|^{4}+4(n-1)c|H|^{2}}$.  It is easy to check that $\alpha(n,|H|,c)$ is strictly bigger than $\frac{1}{n-1}|H|^{2}+2c$.   Lei-Xu \cite{Lei-Xu-1} proved the smooth convergence theorem of mean curvature flow in $\mathbb{F}^{n+p}(c)$ with $c<0$ and $n\geqslant 6$ under the curvature condition   $|h|^{2}\leqslant\alpha(n,|H|,c)$ and $|H|^{2}+n^{2}c>0$. For the case $c>0$,  Lei-Xu \cite{Lei-Xu-3} proved the smooth convergence theorem of mean curvature flow in  $\mathbb{F}^{n+p}(c)$  ($n\geqslant 6$) under a sharp curvature condition   $|h|^{2}\leqslant\gamma(n,|H|,c)$.
Meanwhile, some smooth convergence theorems for the mean curvature flow of arbitrary codimension in space forms under integral curvature pinching conditions were proved in \cite{LXYZ-2,LXZ-1}. See \cite{Lei-Xu-3-1,LXZ-ICCM} for recent progress in the smooth convergence theory of mean curvature flow of arbitrary codimensions.

In the present paper, we focus on the ancient solution of the mean curvature flow, which is the solution of (\ref{eq: MCF}) on the time interval $(-\infty,T)$ for some $T<\infty$. In recent years, various researchers have investigated ancient solutions of the mean curvature flow of codimension one. Closed convex ancient solutions of the curve shortening flow in the plane have been completely classified by Daskalopoulos-Hamilton-Sesum \cite{DHS-JDG} to be either shrinking round circles or Angenent ovals. In higher dimension, a rigidity theorem was proved in \cite{HS2015} stating  that a closed and convex ancient solution of the mean curvature flow on $(-\infty,0)$ in the Euclidean space $\mathbb{R}^{n+1}$ with principal curvatures $\lambda_i$ and mean curvature function $|H|$ of $M_{t}$ satisfying $\lambda_{i}\geqslant\epsilon |H|$ for a positive constant $\epsilon$ is a family of shrinking spheres. For the ancient solution in the sphere, Bryan-Louie \cite{BL} have proved that the only closed, embedded and convex ancient solutions to the curve shortening flow in the unit sphere $\mathbb{S}^{2}$ are equators or shrinking circles,  and for higher dimensional case some characterizations of spherical ancient solution  in terms of curvature pinching have been given in \cite{HS2015}.  In higher codimensional case, it was proved in \cite{LN2017,RS2019} that the ancient solution to the mean curvature flow in the Euclidean space $\mathbb{R}^{n+p}$ satisfying certain pointwise curvature pinching condition uniformly on $(-\infty,0)$ is a family of shrinking spheres, and similar rigidity phenomenon holds for the ancient solutions in the unit sphere $\mathbb{S}^{n+p}$ under the pinching condition $|h|^2\leqslant \frac{1}{n-1}|H|^2+2$ for $n\geqslant 4$ and similar but stronger pinching condition for $n=2,3$.  It should be remarked that the curvature pinching conditions in \cite{LN2017,RS2019} are proposed according to the convergence theorems for the mean curvature flow in \cite{AB-JDG,baker2011mean}.  On the other hand,  Brendle-Choi \cite{BC} proved that the rotationally symmetric bowl soliton is the only complete noncompact ancient solution of mean curvature flow in $\mathbb{R}^{3}$ which is strictly convex and noncollapsed, and they \cite{BC-1} also generalized this result to higher dimensional case assuming additionally that the solution is uniformly two-convex. For more results about ancient solutions of the mean curvature flow and other geometric flows, see \cite{ADS-JDG,BUS-DukeMJ,BIS,CM,Daskalopoulos-PCM,Haslhofer-Hershkovits-CAG,XJWang-AnnMath}, et.al.

In the late 1960's, Simons \cite{Simons}, Lawson \cite{Lawson}, and Chern-do Carmo-Kobayashi \cite{CdoK} proved a famous rigidity theorem for $n$-dimensional compact minimal submanifolds in the unit sphere $\mathbb{S}^{n+p}$ under the pinching condition $|h|^2 \leqslant n/\big(2-\frac{1}{p}\big)$. In \cite{LL}, Li-Li improved the pinching constant to $n/\big(1 + \frac{1}{2} \tmop{sgn} (p - 1)\big)$. Inspired by the Simons-Lawson-Chern-do Carmo-Kobayashi-Li-Li rigidity theorem, we prove the following theorem for ancient solutions of  mean curvature flow in the sphere.
\begin{theorem} \label{thm-1}
  Let $F : M^n \times (- \infty, 0) \rightarrow \mathbb{S}^{n + p}$ be a
  compact ancient solution of mean curvature flow in the unit sphere. If
  \[ \limmax | h |^2 <
     \frac{n}{1 + \frac{1}{2} \tmop{sgn} (p - 1)}, \]
  then $\limmax | h |^2=0$
  and $M_t$ is either a shrinking spherical cap or a totally geodesic sphere.
\end{theorem}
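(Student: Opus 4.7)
The plan is to argue by contradiction, extract a backward limit flow, apply parabolic rigidity, and finally invoke the existing forward convergence theory for MCF in $\mathbb{S}^{n+p}$ to identify $M_t$. Set $a := 1 + \tfrac{1}{2}\tmop{sgn}(p-1)$ and suppose, toward a contradiction, that $U := \limmax |h|^2 \in (0, n/a)$.

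First I would record the evolution equation for $|h|^2$ under MCF in $\mathbb{S}^{n+p}$,
\[
\dt |h|^2 = \Delta |h|^2 - 2|\nabla h|^2 + 2(R_1 - R_2) + \text{(ambient curvature terms)},
\]
and combine it with the Li--Li algebraic inequality $R_1 + R_2 \leq a|h|^4$ to bound the reaction by a polynomial expression in $|h|^2$ and $|H|^2$. The strict asymptotic pinching $U < n/a$ gives a uniform curvature bound $|h|^2 \leq n/a - \epsilon$ on a half-line $(-\infty, T_0]$, which the standard Bernstein/Shi-type parabolic regularity for MCF upgrades to uniform bounds on every covariant derivative $\nabla^k h$ on the same interval.

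Next, choosing $t_k \to -\infty$ with $\max_{M_{t_k}} |h|^2 \to U$ and translating via $\tilde{F}_k(\cdot,s) := F(\cdot, t_k + s)$, the uniform smoothness together with Arzela--Ascoli allows extraction of a subsequential $C^\infty_{\mathrm{loc}}$ limit $\tilde{F}_\infty : \tilde{M} \times (-\infty, 0] \to \mathbb{S}^{n+p}$, which is itself a compact ancient MCF with $|h|^2 \leq U$ throughout and the value $|h|^2 = U$ attained at an interior spacetime point. The strong parabolic maximum principle applied to $|h|^2$ on $\tilde{F}_\infty$ forces $|h|^2 \equiv U$ on $\tilde{M} \times (-\infty, 0]$. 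Analysing the resulting saturated evolution equation, in which both $|\nabla h|^2$ must vanish and the Li--Li bound must be attained, I would conclude that $\tilde{F}_\infty$ is in fact stationary, i.e.\ a minimal submanifold of $\mathbb{S}^{n+p}$ with constant $|h|^2 \equiv U$. The Simons--Lawson--Chern--do Carmo--Kobayashi--Li--Li rigidity theorem then gives $U=0$ or $U \geq n/a$, contradicting $U \in (0, n/a)$; hence $\limmax|h|^2 = 0$.

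To finish, fix $T_0 \ll 0$ so negative that $M_{T_0}$ satisfies $|h|^2 < \alpha(n, |H|, 1)$, the pinching hypothesis of the MCF convergence theorem of Liu--Xu--Ye--Zhao for $\mathbb{S}^{n+p}$; this is possible because $|h|^2$, and hence $|H|^2 \leq n|h|^2$, is uniformly small for very negative $t$, while $\alpha(n, 0, 1) = n > 0$. The convergence theorem then implies that the forward MCF from $M_{T_0}$ either shrinks to a round point in finite time (so $M_t$ on $[T_0, 0)$ is a shrinking family of spherical caps) or converges to a totally geodesic sphere. The backward extension through $T_0$ is determined by the resulting umbilical structure, so $M_t$ on $(-\infty, T_0]$ is likewise a spherical cap or totally geodesic. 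The main obstacle I foresee is the rigidity step above: upgrading ``$|h|^2 \equiv U$ on the limit'' to the stationarity of $\tilde{F}_\infty$ is not immediate from the strong maximum principle alone, and requires a careful extraction of additional equalities from the saturated evolution equation (in particular, showing that the ambient curvature contribution and the Li--Li inequality are simultaneously attained along the limiting flow); once this is in hand, the remaining identification of $M_t$ is essentially a consequence of the known forward convergence theory for MCF in the sphere.
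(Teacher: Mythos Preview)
Your approach is genuinely different from the paper's, and it has two real gaps.

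\medskip

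\textbf{The paper's route.} The paper never passes to a limit flow and never invokes the Simons--Lawson--Chern--do Carmo--Kobayashi--Li--Li rigidity theorem for minimal submanifolds, nor the forward MCF convergence theory. Instead, Theorem~\ref{thm-1} is reduced to Theorem~\ref{thm-2}, which is proved by a direct scalar maximum-principle argument on an explicit auxiliary function of the form $f=\normho^2/(\gamma|H|^2+a)$ (with different choices of $\gamma,a$ depending on $p$). One computes
\[
\Big(\dt-\Delta\Big)f\leqslant \langle\nabla f,X\rangle - 2\delta f
\]
for some $\delta>0$ coming from the strict pinching, so $\max_{M_t}f\leqslant\mathe^{-2\delta(t-s)}\max_{M_s}f$; letting $s\to-\infty$ forces $f\equiv0$, i.e.\ $\mathring{h}\equiv0$, hence $M_t$ is totally umbilical for \emph{every} $t$. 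No compactness, no limit, no appeal to any external classification.

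\medskip

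\textbf{Gap 1: the rigidity step.} You correctly flag this as the weak point, and it is genuinely problematic rather than merely technical. From $|h|^2\equiv U$ on the limit flow, the evolution equation (Lemma~\ref{evo}(i) with $c=1$; note there is no $R_2$ term in $\dt|h|^2$, contrary to what you wrote) gives only
\[
|\nabla h|^2 = R_1 + 2|H|^2 - nU,
\]
and the Li--Li bound $R_1\leqslant a|h|^4=aU^2$ together with $|H|^2\leqslant nU$ yields no contradiction and no vanishing. In particular, nothing here forces $H\equiv0$ or $\nabla h\equiv0$, so you cannot conclude the limit is a \emph{minimal} submanifold, and the minimal-submanifold rigidity theorem is not applicable. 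Making this work would require extracting simultaneous equality in several independent inequalities, and it is not clear this can be done under the hypothesis $|h|^2\equiv U$ alone.

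\medskip

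\textbf{Gap 2: the identification step.} Even granting $\limmax|h|^2=0$, your final paragraph does not identify $M_t$. The Liu--Xu--Ye--Zhao convergence theorem says the flow from $M_{T_0}$ \emph{converges} to a round point or a totally geodesic sphere; it does not say $M_t$ \emph{is} a spherical cap on $[T_0,0)$. Your sentence ``the backward extension through $T_0$ is determined by the resulting umbilical structure'' presupposes $M_{T_0}$ is umbilical, which you have not shown --- you only know $|h|^2$ is small there. By contrast, the paper's argument produces $\mathring{h}\equiv0$ at every time directly, so the identification is immediate.
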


The term ``spherical cap" means a family of shrinking totally umbilical spheres.
Our pinching condition in Theorem \ref{thm-1} is sharp. In fact,  the Clifford minimal hypersurfaces in  $\mathbb{S}^{n + 1}$ satisfy $|h|^2=n$, and  the Veronese minimal surface in $\mathbb{S}^4$ satisfies $|h|^2=\frac{4}{3}$. The Clifford hypersurfaces and Veronese surface are obviously static ancient solutions of the mean curvature flow, so the pinching condition for the case $p=1$ or the case $n=2$ and $p=2$ in Theorem \ref{thm-1} is optimal.

Theorem \ref{thm-1}  is a direct consequence of the following
\begin{theorem} \label{thm-2}
  Let $F : M^n \times (- \infty, 0) \rightarrow \mathbb{S}^{n + p}$ be a
  compact ancient solution of mean curvature flow in the unit sphere. 
  Suppose $F$ satisfies one of the following conditions:
  \begin{enumerateroman}
    \item $\limmax \left( | h |^2 - \min \left\{ \tfrac{3}{n + 2}, \tfrac{4 (n -
    1)}{n (n + 2)} \right\} | H |^2 \right) < n$, for $p = 1$,
    
    \item $\limmax \left( | h |^2 - \frac{4}{3 n} | H |^2 \right) < \frac{2 n}{3}$, for $p = 2$,
    
    \item $\limmax \left( | h |^2 - \frac{4}{3 n} | H |^2
    \right) < \frac{3 n}{5}$ or    
    $\limmax \left( | h |^2 - \frac{1}{n} | H |^2
    \right) < \frac{2 n}{3}$, for $p \geqslant 3$.
  \end{enumerateroman}
  Then $M_t$ is either a shrinking spherical cap or a totally geodesic sphere.
\end{theorem}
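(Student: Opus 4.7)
The strategy is to reduce the ancient-solution problem to the parabolic preservation theory of a pointwise pinching inequality together with a Stampacchia-type iteration scheme run backward in time, carried out in the spirit of Andrews-Baker, Lei-Xu, and Risa-Sinestrari but with the sharper Simons-Lawson-Chern-do Carmo-Kobayashi-Li-Li pinching constants. Writing $|h|^{2}=|\mathring h|^{2}+\tfrac{1}{n}|H|^{2}$, each of the three conditions (i)--(iii) may be rephrased in the form
\[
\limmax\bigl(|\mathring h|^{2}-a|H|^{2}\bigr)<b
\]
for positive constants $a=a(n,p)$ and $b=b(n,p)$ chosen so that the pointwise bound $|\mathring h|^{2}\leqslant a|H|^{2}+b$ lies strictly inside the parabolic preservation cone for mean curvature flow in $\mathbb{S}^{n+p}$ established in the convergence theorems cited in the introduction.

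The first step is to upgrade this asymptotic estimate to a uniform strict pinching on the whole ancient interval. The strict inequality in the hypothesis yields $\varepsilon>0$ and $t_{0}<0$ such that $\max_{M_{t}}(|\mathring h|^{2}-a|H|^{2})\leqslant b-\varepsilon$ for every $t\leqslant t_{0}$. Applying the parabolic maximum principle forward in time to $f:=|\mathring h|^{2}-a|H|^{2}-b+\varepsilon$, whose evolution inequality has a nonpositive reaction at any spatial maximum where $f\geqslant 0$ by the choice of $a,b,\varepsilon$, propagates the bound $f\leqslant 0$ from $t_{0}$ to $[t_{0},0)$; combined with the backward control this gives $|\mathring h|^{2}\leqslant a|H|^{2}+b-\varepsilon$ on all of $M\times(-\infty,0)$.

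The main obstacle, and the heart of the proof, is then to deduce $\mathring h\equiv 0$ from this uniform strict pinching. I plan to attack this via the Stampacchia iteration developed in the cited convergence papers, applied to the functional
\[
\Phi_{q}(t)=\int_{M_{t}}\bigl(|\mathring h|^{2}-a|H|^{2}-b+\tfrac{\varepsilon}{2}\bigr)_{+}^{q}\,\mathrm{d}\mu_{t}.
\]
Combining the Michael-Simon Sobolev inequality along the flow with the strict negativity of the reaction term on the support of the positive part, one obtains a differential inequality forcing $\Phi_{q}$ to decay exponentially on any backward window of fixed length. Because the ancient interval has infinite length in the past, iterating this decay on successive unit-length windows tightens the pointwise pinching strictly below $b-\varepsilon$, and a bootstrapped version of the same iteration applied to $|\mathring h|^{2}$ itself (separating off the $H\equiv 0$ case, where $|h|^{2}=|\mathring h|^{2}$ is directly attacked) drives $|\mathring h|^{2}$ to zero identically. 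The delicate point is keeping the Stampacchia constants uniform as the starting time is taken to $-\infty$, which is possible precisely because the strict gap $\varepsilon$ is uniform in time and the curvature remains controlled by the pinching.

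Once $\mathring h\equiv 0$ on $M\times(-\infty,0)$, each slice $M_{t}$ is totally umbilical in $\mathbb{S}^{n+p}$ and hence a round $n$-sphere lying in a totally geodesic $\mathbb{S}^{n+1}\subset\mathbb{S}^{n+p}$. The mean curvature flow of such an umbilical family is classical: either $|H|\not\equiv 0$ and $M_{t}$ is a family of shrinking spherical caps, or $|H|\equiv 0$ and $M_{t}$ is a static totally geodesic sphere, which are the two alternatives in the conclusion of Theorem \ref{thm-2}.
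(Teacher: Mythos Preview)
Your proposal has a genuine gap at the central step. The Stampacchia iteration you describe for $\Phi_q(t)=\int_{M_t}(|\mathring h|^2-a|H|^2-b+\tfrac{\varepsilon}{2})_+^q\,\mathrm{d}\mu_t$ can at best recover the pointwise bound $|\mathring h|^2\leqslant a|H|^2+b-\tfrac{\varepsilon}{2}$, which you essentially already had; it cannot by itself force $\mathring h\equiv 0$. You acknowledge this and appeal to a ``bootstrapped version of the same iteration applied to $|\mathring h|^{2}$ itself,'' but that is the entire content of the theorem, and you give no mechanism for it. The reaction term in $\partial_t|\mathring h|^2$ is $2(R_1-\tfrac1n R_2)-2n|\mathring h|^2$, which is not manifestly nonpositive under your pinching, so an integral of a power of $|\mathring h|^2$ alone has no reason to decay. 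Your first step (forward propagation of the bound) hides the same difficulty: the claim that the reaction for $|\mathring h|^2-a|H|^2-b+\varepsilon$ is nonpositive at a nonnegative spatial maximum is precisely the algebraic estimate you have deferred, and for the sharp constants in (ii)--(iii) it is not a consequence of the cited convergence theorems.

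The paper avoids Stampacchia entirely and works pointwise with a \emph{ratio}. One studies, in each case, a function of the form $f=|\mathring h|^2/(\gamma|H|^2+a)$ with constants $\gamma,a$ chosen so that the pinching makes $f$ uniformly bounded, and derives its evolution equation. The gradient terms are handled by $|\nabla\mathring h|^2\geqslant\tfrac{2(n-1)}{n(n+2)}|\nabla H|^2$; the reaction terms require, in higher codimension, a careful algebraic lemma bounding $R_1-\tfrac1n R_2$ in terms of $|\mathring h|^2$, $|H|^2$, and the off-principal-normal quantity $P=\sum_{\alpha>1}\sum_{i,j}(\mathring h^\alpha_{ij})^2$, reducing the sign question to the negativity of an explicit two-variable function. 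The outcome is a clean inequality $(\partial_t-\Delta)f\leqslant\langle\nabla f,X\rangle-2\delta f$ with $\delta>0$ determined by the strict gap in the hypothesis. The ancient maximum principle then gives $\max_{M_t}f\leqslant e^{-2\delta(t-s)}\max_{M_s}f$ for all $s<t$, and letting $s\to-\infty$ yields $f\equiv 0$ in one stroke. No iteration, no Sobolev inequality, and no separate forward-propagation step are needed; the asymptotic hypothesis is used only to restrict to $(-\infty,t_0)$ before running this argument.
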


The curvature condition in Theorem \ref{thm-2} is obviously weaker than those  in  \cite{LN2017,RS2019}.
It should be mentioned that the pinching constant in Theorem \ref{thm-2} is sharp.

If the ambient space is the hyperbolic space   $\mathbb{H}^{n+p}$ with constant curvature $-1$, we prove the following rigidity theorem.

\begin{theorem} \label{thm-hyperbolic}
  Let $F : M^n \times (- \infty, 0) \rightarrow \mathbb{H}^{n + p}$ be a
  compact ancient solution of mean curvature flow in the hyperbolic space.
  Suppose there exists a positive number $\varepsilon$, such that for all $t <
  0$, $M_t$ satisfies $| H | > n$ and
  \[ | \mathring{h} |^2 \leqslant k | H |^2  \left( 1 - \tfrac{n^2}{| H |^2}
     \right)^{2 + \varepsilon}, \]
  where
  \[ k = \begin{cases}
       \frac{1}{3 n}, & p \geqslant 2 \enspace \text{and} \enspace n \geqslant
       7,\\
       \frac{n - 1}{2 n (n + 2)}, & \text{otherwise} .
     \end{cases} \]
  Then $M_t$ is a family of shrinking spheres.
\end{theorem}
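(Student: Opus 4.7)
The plan is to carry over the strategy that proved Theorems~\ref{thm-1}--\ref{thm-2} in the spherical ambient, suitably adjusted for the hyperbolic correction factor $(1-n^2/|H|^2)$. I would introduce the auxiliary pinching function
\[
   \mathcal{F}:=\frac{|\mathring h|^2}{|H|^2\bigl(1-\tfrac{n^2}{|H|^2}\bigr)^{2+\varepsilon}},
\]
which is well defined and non-singular on the flow because of the hypothesis $|H|>n$. The hypothesis reads $\sup_{M_t}\mathcal F\leqslant k$ for every $t<0$, and $\mathcal F$ vanishes precisely on totally umbilical configurations. Showing $\mathcal F\equiv 0$ would finish the proof, since a compact totally umbilical submanifold of $\mathbb H^{n+p}$ with $|H|>n$ is a geodesic sphere, so the whole flow would be a shrinking family of spheres.

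First I would compute $\partial_t\mathcal F-\Delta\mathcal F$ using the standard evolution equations for $|H|^2$ and $|\mathring h|^2$ along MCF in a space form, specialised to $c=-1$ (as recorded, for instance, in Lei--Xu~\cite{Lei-Xu-1}). At a spatial maximum the first-order term, a multiple of $\langle\nabla\log[|H|^2(1-n^2/|H|^2)^{2+\varepsilon}],\nabla\mathcal F\rangle$, vanishes, and the remaining zero-order reaction is a polynomial in the components of $\mathring h$ and $H$, together with the ambient-curvature contributions proportional to $c|H|^2$ and $c$ that are specific to the hyperbolic setting.

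Next I would bound the reaction term using the sharp algebraic inequalities of Li--Li and Lu type on the traceless second fundamental form. These are precisely what produce the constant $\tfrac{1}{3n}$ for $p\geqslant 2$ in dimensions $n\geqslant 7$, where the codimension-sensitive refinement applies, and the weaker $\tfrac{n-1}{2n(n+2)}$ in the remaining low-dimensional or hypersurface regimes; this explains the dichotomy in the definition of $k$. The crucial role of the extra $\varepsilon$ in the exponent is to upgrade a borderline preservation inequality into a strictly improving one: the factor $(1-n^2/|H|^2)^{-\varepsilon}$ in $\mathcal F$ forces the reaction at a maximum to be dominated by $-C\mathcal F^{1+\delta}$ with $\delta=\varepsilon/2$, and the assumption $|H|>n$ is exactly what absorbs the negative hyperbolic contributions back into the positive quartic terms.

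Combining these two ingredients, Hamilton's ODE comparison applied to $\mathcal F_{\max}(t):=\max_{M_t}\mathcal F$ yields $\tfrac{d}{dt}\mathcal F_{\max}\leqslant -C\mathcal F_{\max}^{1+\delta}$. Integrating this backwards from any fixed $t_1\in(-\infty,0)$ forces $\mathcal F_{\max}$ to blow up in \emph{finite} backward time, contradicting the uniform bound $\mathcal F\leqslant k$ on $(-\infty,0)$ unless $\mathcal F_{\max}\equiv 0$. The main obstacle I anticipate is precisely the reaction-term computation in the previous paragraph: tracking the delicate interplay between the singular prefactor $(1-n^2/|H|^2)^{-(2+\varepsilon)}$, the hyperbolic ambient terms, and the optimal Li--Li--Lu quartic inequalities, in order to extract the exact threshold $n\geqslant 7$ and the sharp values of $k$, is what drives the entire argument, and I expect the algebra to parallel that of Lei--Xu~\cite{Lei-Xu-1}, with the $\varepsilon$ headroom of the ancient-solution hypothesis doing the rest.
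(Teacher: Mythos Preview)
Your auxiliary function $\mathcal F$ and the overall maximum-principle strategy match the paper exactly. However, two of your predictions about the actual computation are off, and they matter.

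First, the differential inequality you would obtain is \emph{linear}, not superlinear: the paper gets $(\partial_t-\Delta)\mathcal F\leqslant \langle\text{gradient term}\rangle-2\varepsilon n\,\mathcal F$, and the ancient conclusion then follows from the exponential estimate $\max_{M_t}\mathcal F\leqslant e^{-2\varepsilon n(t-s)}\max_{M_s}\mathcal F\leqslant e^{-2\varepsilon n(t-s)}k$ by sending $s\to-\infty$. Your claimed bound $-C\mathcal F^{1+\varepsilon/2}$ and the backward blow-up argument are not what the calculation produces. The mechanism behind the $-\varepsilon n$ is an exact logarithmic-derivative identity for $\varphi(x)=x(1-n^2/x)^{2+\varepsilon}$, namely $1-x\varphi'(x)/\varphi(x)=-(2+\varepsilon)n^2/(x-n^2)$; combined with $n|h|^2\geqslant |H|^2>n^2$ this turns the hyperbolic reaction term $|h|^2+n-\frac{\varphi'|H|^2}{\varphi}(|h|^2-n)$ into something $\leqslant-\varepsilon n$. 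This is where the $\varepsilon$ headroom enters, not through any power of $\mathcal F$.

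Second, the dichotomy in $k$ does not come from Li--Li/Lu quartic inequalities. It arises from two separate constraints. The gradient term in $(\partial_t-\Delta)\mathcal F$ involves $\varphi'(|H|^2)+2|H|^2\varphi''(|H|^2)$, and one needs the nontrivial calculus fact that this quantity is bounded above by $4$ for all $x>n^2$; combined with Huisken's estimate $|\nabla\mathring h|^2\geqslant\frac{2(n-1)}{n(n+2)}|\nabla H|^2$ this forces $4k\leqslant\frac{2(n-1)}{n(n+2)}$, i.e.\ $k\leqslant\frac{n-1}{2n(n+2)}$. Independently, for $p\geqslant2$ the cross-codimension term $P$ in the reaction must be discarded with a good sign, and this requires $3k\leqslant\frac1n$, i.e.\ $k\leqslant\frac{1}{3n}$. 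The value of $k$ is the minimum of these two bounds, and $\frac{1}{3n}\leqslant\frac{n-1}{2n(n+2)}$ precisely when $n\geqslant7$. Li--Li is used only in the background estimate for $R_1$ that produces the $P$-term; there is no Lu-type refinement here.
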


In Theorem \ref{thm-hyperbolic}, $\mathring{h}$ is the tracefree second fundamental form of a submanifold.
As far as we know, Theorem \ref{thm-hyperbolic} is the first result in the literature about the rigidity of ancient solutions of mean curvature flow in the hyperbolic space. Now we give more details on the shrinking spheres in the hyperbolic space. A geodesic sphere of radius $r$ in the hyperbolic space has mean curvature $n \coth r$. Thus, the family of geodesic spheres whose radii are given by $r (t) = \tmop{arccosh}\, \mathe^{- n t}$ is an ancient solution of mean curvature flow. Furthermore, the mean curvature functions of the geodesic spheres are $|H| (t) = n \mathe^{- n t} / \sqrt{\mathe^{- 2 n t} - 1}$. We see that $|H| (t)$ are always larger than $n$, and tend to $n$ as $t \rightarrow - \infty$.

The rigidity of submanifolds under integral curvature pinching condition is also an attractive topic in submanifold theory. For instance, Shiomaha, Xu and Gu \cite{SX-2,Xu-JMSJ,Xu-Gu-CAG} proved several geometric and topological rigidity theorems for submanifolds in $\mathbb{F}^{n + p}(c)$ with $c\geqslant 0$  under the pinching condition of the curvature integral $ \int_{M} | \mathring{h} |^n \mathd \mu$. This curvature integral is called the Willmore functional.  It is invariant under conformal transformations of the ambient space and is a higher dimensional generalization of the classical Willmore functional for 2-dimensional surfaces. Inspired by these global rigidity theorems, we prove the following
\begin{theorem}\label{thm-integral}
  Let  $F : M^n \times (- \infty, 0) \rightarrow \mathbb{F}^{n +  p}(c)$ be a compact ancient solution of mean curvature flow in a space form $\mathbb{F}^{n+p}(c)$ with $c\geqslant 0$.
  Suppose there holds
   \[
    \underset{t \rightarrow - \infty}{\overline{\lim}} \int_{M_t} | \mathring{h} |^n \mathd \mu_t < C (n),
    \]
  where $C (n)$ is a positive constant explicitly depending on $n$. Then $M_t$
  is either a shrinking sphere or a totally geodesic sphere in $\mathbb{F}^{n +  p}(c)$ with $c>0$.
\end{theorem}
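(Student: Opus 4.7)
The plan is to prove that under the hypothesized smallness of the Willmore-type integral
\[
V(t):=\int_{M_t}|\mathring{h}|^n\,\mathd\mu_t,
\]
one actually has $V(t)\equiv 0$ for every $t<0$. Vanishing of $\mathring{h}$ makes $M_t$ totally umbilical at every time, and the classification of totally umbilical submanifolds of $\mathbb{F}^{n+p}(c)$ then forces $M_t$ to be a round geodesic sphere. Restricting the mean curvature flow equation to this one-parameter family reduces it to an ODE for the umbilical radius: in the case $c=0$ the only ancient solution is the shrinking sphere, while in the case $c>0$ both the shrinking spherical cap and the stationary totally geodesic equator are admissible, giving the stated conclusion.

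The main technical step is a differential inequality
\[
V'(t)\;\le\;-\delta(n,c)\,V(t),\qquad \delta(n,c)>0,
\]
valid as long as $V(t)<C(n)$. Granting it, the ancient mechanism closes the argument: $V(t)\le V(s)\mathe^{\delta(s-t)}$ for every $s<t$, and sending $s\to-\infty$ while using $\overline{\lim}_{s\to-\infty}V(s)<C(n)$ forces $V(t)=0$. To produce the inequality I would first write Simons' identity for $|\mathring{h}|^2$ along the flow in $\mathbb{F}^{n+p}(c)$, then differentiate $V$ via $\dt\,\mathd\mu_t=-|H|^2\,\mathd\mu_t$ and integrate by parts the Laplacian contribution; combined with the Kato-type bound $|\nabla h|^2\ge\tfrac{3}{n+2}|\nabla H|^2$ this yields
\[
V'(t)+A(n)\!\int_{M_t}\!\bigl|\nabla(|\mathring{h}|^{n/2})\bigr|^2\mathd\mu_t\le B(n)\!\int_{M_t}\!|\mathring{h}|^{n+2}\mathd\mu_t+K(n)\!\int_{M_t}\!|H|^2|\mathring{h}|^{n}\mathd\mu_t-nc\,V(t),
\]
with positive dimensional constants $A$, $B$, $K$. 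H\"older gives $\int|\mathring{h}|^{n+2}\le V^{2/n}\bigl(\int|\mathring{h}|^{n^2/(n-2)}\bigr)^{(n-2)/n}$, and the Michael--Simon Sobolev inequality applied to $f=|\mathring{h}|^{n/2}$ controls this last factor by $\int\bigl(|\nabla f|^2+|H|^2f^2\bigr)\mathd\mu$. Choosing $C(n)$ so that $V^{2/n}$ is small enough to reabsorb both the gradient and the $|H|^2f^2$ pieces into the negative terms already on the left delivers the inequality, with $\delta(n,c)=nc$ plus a positive gradient residual.

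The main obstacle is the case $c=0$, where the ambient geometry supplies no direct coercive term. There I would arrange the absorption so that a strictly negative residual $-\eta(n)\int|\nabla(|\mathring{h}|^{n/2})|^2\,\mathd\mu$ survives on the right, and apply the Michael--Simon inequality a second time as a Sobolev--Poincar\'e-type bound estimating $V=\|f\|_{L^2}^2$ by that residual together with the available $|H|^2|\mathring{h}|^n$ contribution; this produces a purely dimensional $\delta(n,0)>0$. The delicate point, and what fixes the explicit value of the threshold $C(n)$ in the statement, is the simultaneous balancing of the constants arising from the Kato-type bound, the H\"older split of $\int|\mathring{h}|^{n+2}$, and the Michael--Simon constant, so that absorption succeeds uniformly in $t$. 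Once $V'\le-\delta V$ is secured, the rigidity and classification described above finish the proof.
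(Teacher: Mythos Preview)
Your scheme is essentially right for $c>0$ and close to the paper's argument in the surface case, but there is a genuine gap in the Euclidean case $c=0$. The differential inequality $V'(t)\le -\delta(n,0)\,V(t)$ with a purely dimensional $\delta>0$ cannot hold: the Willmore integral $V=\int_{M_t}|\mathring{h}|^n\,\mathd\mu_t$ is invariant under the parabolic rescaling $F\mapsto\lambda F(\,\cdot\,,\lambda^{-2}t)$, while $V'$ acquires a factor $\lambda^{-2}$, so no fixed $\delta$ can survive. Your proposed fix, namely using Michael--Simon as a Poincar\'e substitute to convert a residual $-\eta\int|\nabla(|\mathring{h}|^{n/2})|^2$ into $-\delta V$, does not work for the same reason: the Michael--Simon inequality only controls the scale-invariant $L^{2n/(n-2)}$ norm in terms of $\int(|\nabla f|^2+|H|^2f^2)$, and there is no scale-free bound on $\|f\|_{L^2}^2=V$ by those quantities on a submanifold of $\mathbb{R}^{n+p}$.

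The paper handles $c=0$ (for $n\ge 3$) by a substantively different mechanism. It does not try to kill $|\mathring{h}|$ directly; instead it introduces the auxiliary quantity $U=|\mathring{h}|^2-\tfrac{1}{n^2}|H|^2$ and proves, via Michael--Simon and the smallness $V<C(n)$, differential inequalities of the form
\[
\frac{\mathd}{\mathd t}\int_{M_t}U_+^{\,q}\,\mathd\mu_t\;\le\;-\,\tilde C\Bigl(\int_{M_t}U_+^{\,qn/(n-2)}\,\mathd\mu_t\Bigr)^{(n-2)/n}
\]
for $q=\tfrac{n}{2}$ and $q=\tfrac{n^2}{2(n-2)}$. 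The second choice makes $\int U_+^{\,n^2/(2(n-2))}$ nonincreasing, and the first, combined with the a~priori bound $\int U_+^{\,n/2}\le V<C(n)$, then forces $\int U_+^{\,n^2/(2(n-2))}=0$ upon integrating from $t_1\to-\infty$. This is an ``integrable-in-time'' argument rather than an exponential-decay one, and it is exactly what circumvents the scaling obstruction. The output is only the \emph{pointwise} pinching $|\mathring{h}|^2\le\tfrac{1}{n^2}|H|^2$, not $\mathring{h}=0$; the paper then invokes the pointwise rigidity theorem of Lynch--Nguyen (and, for $c>0$, the results of its own Section~3) to conclude. So the missing idea in your proposal is this two-step reduction: integral pinching $\Rightarrow$ pointwise pinching via $U_+$, followed by the known pointwise ancient rigidity.
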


Theorem \ref{thm-integral} is also the first result as far as we know in the literature about the rigidity of ancient solutions of mean curvature flow under integral curvature pinching condition.

The paper is organized as follows. In Section 2, we recall the evolution equations
along the mean curvature flow in space forms and derive several curvature inequalities which
will be used in the proofs of theorems. In Section 3, we first prove several rigidity theorems for ancient solutions of mean curvature flow in the sphere, which combined together verify Theorems  \ref{thm-1} and \ref{thm-2}, then we give the proof of Theorem \ref{thm-hyperbolic}. Theorem \ref{thm-integral} will be proved in Section 4.

\section{Preliminaries}

Let $M^n$ be an $n$-dimensional Riemannian submanifold immersed in an $(n+p)$-dimensional simply connected space
form $\mathbb{F}^{n + p} (c)$ with constant curvature $c$. We denote by
$\nabla$ and $\overline{\nabla}$ the Levi-Civita connections of the
submanifold $M$ and the ambient space $\mathbb{F}^{n + p} (c)$, respectively.
The second fundamental form of $M$ is defined as
\[ h (u, v) = \overline{\nabla}_u v - \nabla_u v \]
for tangent vector fields $u, v$ on $M$.

Let $\{ e_i  | 1 \leqslant i \leqslant n \}$ be a local orthonormal frame for
the tangent bundle and $\{ \nu_{\alpha}  | 1 \leqslant \alpha \leqslant p \}$
be a local orthonormal frame for the normal bundle. In a local frame, we set
$h (e_i, e_j) = \sum_{\alpha} h^{\alpha}_{i j} \nu_{\alpha}$. The mean
curvature vector is given by
\[ H = \sum_{\alpha} H^{\alpha} \nu_{\alpha}, \quad H^{\alpha} = \sum_i
   h^{\alpha}_{i i} . \]
Let $\mathring{h} = h - \tfrac{1}{n} H \otimes g$ be the tracefree second
fundamental form. Its norm has the relation $\normho^2 = | h |^2 - \frac{1}{n}
| H |^2$.

Let $F : M^n \times I \rightarrow \mathbb{F}^{n + p} (c)$ be a mean curvature
flow. Let $M_t = F (M, t)$ be the submanifold at $t$. Andrews and Baker
derived the following evolution equations along the mean curvature flow
{\cite{AB-JDG,baker2011mean}}.

\begin{lemma}
  \label{evo}For the mean curvature flow $F : M^n \times I \rightarrow
  \mathbb{F}^{n + p} (c)$, we have
  \begin{enumerateroman}
    \item $\dt | h |^2 = \Delta | h |^2 - 2 | \nabla h |^2 + 2 R_1 + 4 c | H
    |^2 - 2 n c | h |^2$,

    \item $\dt | H |^2 = \Delta | H |^2 - 2 | \nabla H |^2 + 2 R_2 + 2 n c | H
    |^2$,

    \item $\dt \normho^2 = \Delta \normho^2 - 2 | \nabla \mathring{h} |^2 + 2 R_1 -
    \frac{2}{n} R_2 - 2 n c \normho^2$,
  \end{enumerateroman}
  where
  \[ R_1 = \sum_{\alpha, \beta} \Bigg( \sum_{i, j} h^{\alpha}_{i j}
     h^{\beta}_{i j} \Bigg)^2 + \sum_{i, j, \alpha, \beta} \Bigg( \sum_k
     (h^{\alpha}_{i k} h^{\beta}_{j k} - h^{\beta}_{i k} h^{\alpha}_{j k})
     \Bigg)^2, \]
  \[ R_2 = \sum_{i, j} \Bigg( \sum_{\alpha} H^{\alpha} h^{\alpha}_{i j}
     \Bigg)^2 . \]
\end{lemma}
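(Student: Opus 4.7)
The plan is to derive (i), (ii), (iii) by following the standard prescription for evolution equations under the mean curvature flow: first obtain the variational formulas for the induced metric, normal bundle, and second fundamental form, then contract to get the evolution of the squared norms. Part (iii) will be immediate from (i) and (ii) via $\normho^2 = |h|^2 - \tfrac{1}{n}|H|^2$, so the real content is (i) and (ii).

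First I would compute the variation of the metric under $\partial_t F = H$, obtaining $\partial_t g_{ij} = -2 \sum_\alpha H^\alpha h^\alpha_{ij}$ and hence $\partial_t g^{ij} = 2\sum_\alpha H^\alpha h^{\alpha\,ij}$. Choosing a normal frame $\{\nu_\alpha\}$ whose time derivative has no normal component (standard gauge choice), a direct computation gives
\[
\partial_t h^\alpha_{ij} = \nabla_i \nabla_j H^\alpha - H^\beta h^\beta_{ik} h^\alpha_{kj} + (\text{ambient curvature terms}).
\]
In a space form $\mathbb{F}^{n+p}(c)$ the ambient curvature tensor is $\bar R_{ABCD} = c(g_{AC}g_{BD} - g_{AD}g_{BC})$, so every Gauss/Codazzi correction reduces to a constant multiple of $c$; this is why only the two simple $c$-terms in (i) and (ii) appear.

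Next I would invoke the Simons-type identity for the second fundamental form in a space form, which has the schematic form
\[
\Delta h^\alpha_{ij} = \nabla_i \nabla_j H^\alpha + (\text{quartic terms in } h) + c\bigl(n\, h^\alpha_{ij} - H^\alpha g_{ij}\bigr).
\]
Subtracting from the evolution of $h^\alpha_{ij}$ eliminates the Hessian of $H$ and expresses $\partial_t h^\alpha_{ij} - \Delta h^\alpha_{ij}$ algebraically in $h$, $H$, and $c$. Contracting with $2 h^{\alpha\,ij}$ and using $\Delta |h|^2 = 2 h^\alpha_{ij} \Delta h^\alpha_{ij} + 2|\nabla h|^2$, together with the metric-variation correction $\partial_t |h|^2 = 2 h^\alpha_{ij}\partial_t h^\alpha_{ij} + 2(\partial_t g^{ij})\,h^\alpha_{ik} h^{\alpha\,k}_{\ j}$, produces (i). The quartic terms from the Simons identity and from the $\partial_t g^{ij}$ correction must then be assembled into the Andrews--Baker invariant $R_1$, which contains both the pair $\bigl(\sum_{ij} h^\alpha_{ij} h^\beta_{ij}\bigr)^2$ and the normal-commutator term $\bigl(\sum_k(h^\alpha_{ik}h^\beta_{jk} - h^\beta_{ik}h^\alpha_{jk})\bigr)^2$. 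Equation (ii) follows by the same procedure applied to $|H|^2 = \sum_\alpha(H^\alpha)^2$ via
\[
\partial_t H^\alpha = \Delta H^\alpha + H^\beta h^\beta_{ij} h^{\alpha\,ij} + n c H^\alpha,
\]
where only the scalar $R_2$ survives after contraction, since no normal commutator enters the computation of $H$.

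The main obstacle is purely bookkeeping: tracking the quartic expressions that arise, in the correct codimension-covariant form, from both the Simons identity and the metric-variation correction, and verifying that they recombine into precisely $2R_1$ with the stated signs. The ambient $c$-dependence is the easy part because the space-form assumption collapses all ambient curvature contributions to scalar multiples of $c$; once the flat-ambient identities of Andrews--Baker are in hand, only the two extra terms $4c|H|^2 - 2nc|h|^2$ and $2nc|H|^2$ need to be appended.
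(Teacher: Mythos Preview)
Your outline is correct and follows the standard derivation; in fact this is essentially the computation carried out by Andrews--Baker in the references the paper cites. Note, however, that the paper itself does not give a proof of this lemma at all: it simply quotes the evolution equations from \cite{AB-JDG,baker2011mean} and moves on, so there is no ``paper's own proof'' to compare against beyond that citation.
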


In particular, if the codimension is 1, then $R_1 = | h |^4$, $R_2 = | H |^2 |
h |^2$.

Huisken {\cite{MR772132}} obtained the following estimate for the gradient
terms
\begin{equation}
  | \nabla h |^2 \geqslant \frac{3}{n + 2} | \nabla H |^2, \label{Delh2}
\end{equation}
which implies
\begin{equation}
  | \nabla \mathring{h} |^2 = | \nabla h |^2 - \frac{1}{n} | \nabla H |^2 \geqslant
  \frac{2 (n - 1)}{n (n + 2)} | \nabla H |^2 . \label{dh2}
\end{equation}

We will need the following matrix inequality due to Li-Li {\cite{LL}}.

\begin{proposition}
  \label{pmatrix}Let $A_1, \cdots, A_p$ be $p (\geqslant 2)$ symmetric
  matrices. Then
  \[ \sum_{\alpha, \beta} | A_{\alpha} A_{\beta} - A_{\beta} A_{\alpha} |^2 +
     \sum_{\alpha, \beta} [\tmop{tr} (A_{\alpha} A_{\beta})]^2 \leqslant
     \frac{3}{2} \Bigg( \sum_{\alpha} | A_{\alpha} |^2 \Bigg)^2 . \]
\end{proposition}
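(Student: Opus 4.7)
The plan is to exploit the $O(p)$-invariance of both sides, reduce to a normal form by diagonalizing the Gram matrix $G_{\alpha \beta} = \tmop{tr} (A_\alpha A_\beta)$, apply a classical commutator bound, and then sharpen the estimate in the residual case $p \geqslant 3$.

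First, I would observe that both sides of the inequality are invariant under the substitution $A_\alpha \mapsto \sum_\beta T_{\alpha \beta} A_\beta$ for any $T \in O (p)$, which transforms $G$ into $T G T^{\top}$ and preserves all $\alpha$-contractions. Choosing $T$ to diagonalize $G$, I may therefore assume $\tmop{tr} (A_\alpha A_\beta) = a_\alpha \delta_{\alpha \beta}$ with $a_\alpha = |A_\alpha|^2$. Writing $S = \sum_\alpha a_\alpha$, the inequality reduces to
\[ \sum_{\alpha \neq \beta} |A_\alpha A_\beta - A_\beta A_\alpha|^2 + \sum_\alpha a_\alpha^2 \leqslant \frac{3}{2} S^2. \]

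The workhorse is the elementary bound $|BC - CB|^2 \leqslant 2 |B|^2 |C|^2$ for symmetric $B, C$, which follows from $\tmop{tr} (B^2 C^2) \leqslant |B|^2 |C|^2$ after diagonalizing $B$. Applying this termwise gives
\[ \sum_{\alpha \neq \beta} |[A_\alpha, A_\beta]|^2 \leqslant 2 \sum_{\alpha \neq \beta} a_\alpha a_\beta = 2 \Bigl( S^2 - \sum_\alpha a_\alpha^2 \Bigr), \]
so the LHS is at most $2 S^2 - \sum_\alpha a_\alpha^2$. For $p = 2$ this together with Cauchy-Schwarz ($\sum_\alpha a_\alpha^2 \geqslant S^2 / 2$) immediately yields $\tfrac{3}{2} S^2$ and closes the argument.

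The main obstacle is the case $p \geqslant 3$, where the argument above only gives the Chern-do Carmo-Kobayashi constant $(2 - 1 / p) S^2$, strictly larger than $\tfrac{3}{2} S^2$. I would close the gap via a sharpened three-matrix commutator inequality of the form
\[ \sum_{\alpha \neq \beta} |[B_\alpha, B_\beta]|^2 \leqslant \frac{3}{2} \Bigl( \sum_\alpha |B_\alpha|^2 \Bigr)^2 - \sum_\alpha |B_\alpha|^4 \]
for any triple of pairwise Frobenius-orthogonal symmetric matrices $B_1, B_2, B_3$. The geometric content is that equality in $|[B, C]|^2 = 2 |B|^2 |C|^2$ forces $B$ and $C$ into a common $2 \times 2$ block in a traceless-diagonal / off-diagonal configuration, and a third symmetric matrix Frobenius-orthogonal to both cannot sit in the same two-dimensional block and still saturate its own commutator bounds. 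To prove this refinement I would either (i) run a Lagrange-multiplier argument on the compact set $\{\sum |B_\alpha|^2 = 1,\ \tmop{tr} (B_\alpha B_\beta) = 0 \text{ for } \alpha \neq \beta\}$ and classify the critical configurations up to simultaneous orthogonal conjugation, or (ii) argue algebraically from the identity $|BC - CB|^2 = 2 \tmop{tr} (B^2 C^2) - 2 \tmop{tr} (BCBC)$ combined with $\tmop{tr} ((BC + CB)^2) \geqslant 0$. Applying the resulting three-matrix inequality to the triple of indices carrying the three largest values of $a_\alpha$, and absorbing the remaining cross-commutators by the elementary bound, should then yield the sharp constant $\tfrac{3}{2}$; the bookkeeping in that final combination is the step I expect to require the most care.
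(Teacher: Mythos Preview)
The paper does not supply a proof of this proposition; it is quoted verbatim from Li--Li \cite{LL} and used as a black box. So there is nothing to compare against on the paper's side.

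As for your argument itself: the $O(p)$-reduction to a diagonal Gram matrix and the treatment of $p=2$ are correct and standard. The problem is the case $p\geqslant 3$, where you have two real gaps, and the second one is fatal to the strategy as written.

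First, the three-matrix refinement
\[
\sum_{\alpha\neq\beta}|[B_\alpha,B_\beta]|^2\leqslant\tfrac{3}{2}\Bigl(\sum_\alpha|B_\alpha|^2\Bigr)^2-\sum_\alpha|B_\alpha|^4
\]
is exactly the $p=3$ case of the proposition after diagonalization, so it is the heart of the matter, and you do not prove it. Option (ii) as stated cannot yield it: the identity $|[B,C]|^2=2\operatorname{tr}(B^2C^2)-2\operatorname{tr}(BCBC)$ together with $\operatorname{tr}((BC+CB)^2)\geqslant 0$ only recovers the crude pairwise bound, not a three-matrix improvement. Option (i) is a plausible route but is precisely the nontrivial content of Li--Li's paper.

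Second, even granting the three-matrix inequality, your scheme ``apply it to the three largest $a_\alpha$ and absorb the remaining cross-commutators by $|[B,C]|^2\leqslant 2|B|^2|C|^2$'' does \emph{not} yield the constant $\tfrac{3}{2}$ for $p\geqslant 4$. Take $p=4$ with $a_1=a_2=a_3=a_4=1$, so $S=4$ and the target is $\tfrac{3}{2}S^2=24$. Your bound on the commutators among $\{1,2,3\}$ is $\tfrac{3}{2}\cdot 9-3=10.5$; the six remaining ordered pairs involving index $4$ are bounded by $2\cdot 6=12$; adding $\sum a_\alpha^2=4$ gives $26.5>24$. So the bookkeeping you flagged as delicate in fact fails outright. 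The Li--Li argument does not reduce to a fixed-size subproblem plus the elementary bound; it requires a finer mechanism that controls how many of the $A_\alpha$ can simultaneously come close to saturating the pairwise commutator inequality.
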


Using Proposition \ref{pmatrix}, we have
\begin{eqnarray}
  R_1 & = & \sum_{\alpha, \beta} \Bigg( \sum_{i, j} \mathring{h}^{\alpha}_{i j}
  \mathring{h}^{\beta}_{i j} \Bigg)^2 + \sum_{i, j, \alpha, \beta} \Bigg( \sum_k
  (\mathring{h}^{\alpha}_{i k}  \mathring{h}^{\beta}_{j k} - \mathring{h}^{\beta}_{i k}
  \mathring{h}^{\alpha}_{j k}) \Bigg)^2 \nonumber\\
  &  & + \frac{2}{n} \sum_{i, j} \Bigg( \sum_{\alpha} H^{\alpha}
  \mathring{h}^{\alpha}_{i j} \Bigg)^2 + \frac{1}{n^2} | H |^4 \nonumber\\
  & \leqslant & \frac{3}{2} | \mathring{h} |^4 + \frac{2}{n} \sum_{i, j} \Bigg(
  \sum_{\alpha} H^{\alpha} \mathring{h}^{\alpha}_{i j} \Bigg)^2 + \frac{1}{n^2} |
  H |^4  \label{R12R2/n}\\
  & = & \frac{3}{2} | \mathring{h} |^4 + \frac{2}{n} R_2 - \frac{1}{n^2} | H |^4 .
  \nonumber
\end{eqnarray}

At a point on $M$, we choose an orthonormal frame $\{ \nu_{\alpha} \}$ for the
normal space, such that $H = | H | \nu_1$, and an orthonormal frame $\{ e_i
\}$ for the tangent space, such that $h^1_{i j} = \lambda_i \delta_{i j}$.
Then $\left( \mathring{h}^1_{i j} \right)$ is also diagonal, we denote its
diagonal elements by $\mathring{\lambda}_i$. Thus $\mathring{\lambda}_i =
\lambda_i - \frac{1}{n} | H |$ and $\mathring{h}^{\alpha}_{i j} =
h^{\alpha}_{i j}$ for $\alpha > 1$.

With the special frame, $R_1$ becomes
\begin{eqnarray*}
  R_1 & = & \Bigg( \sum_i \lambda_i^2 \Bigg)^2 + 2 \sum_{\alpha > 1} \Bigg(
  \sum_i \mathring{\lambda}_i  \mathring{h}^{\alpha}_{i i} \Bigg)^2 +
  \sum_{\alpha, \beta > 1} \Bigg( \sum_{i, j} \mathring{h}^{\alpha}_{i j}
  \mathring{h}^{\beta}_{i j} \Bigg)^2\\
  &  & + 2 \underset{i \neq j}{\underset{\alpha > 1}{\sum}} \left( \left(
  \mathring{\lambda}_i - \mathring{\lambda}_j \right) \mathring{h}^{\alpha}_{i
  j} \right)^2 + \underset{i, j}{\underset{\alpha, \beta > 1}{\sum}} \Bigg(
  \sum_k \left( \mathring{h}^{\alpha}_{i k}  \mathring{h}^{\beta}_{j k} -
  \mathring{h}^{\alpha}_{j k}  \mathring{h}^{\beta}_{i k} \right) \Bigg)^2 .
\end{eqnarray*}
We use Cauchy-Schwarz inequality to get
\[ \sum_{\alpha > 1} \Bigg( \sum_i \mathring{\lambda}_i
   \mathring{h}^{\alpha}_{i i} \Bigg)^2 \leqslant \sum_{\alpha > 1} \Bigg(
   \sum_i \mathring{\lambda}_i^2 \Bigg) \Bigg( \sum_i \left(
   \mathring{h}^{\alpha}_{i i} \right)^2 \Bigg) . \]
We also have
\[ \underset{i \neq j}{\underset{\alpha > 1}{\sum}} \left( \left(
   \mathring{\lambda}_i - \mathring{\lambda}_j \right)
   \mathring{h}^{\alpha}_{i j} \right)^2 \leqslant \underset{i \neq
   j}{\underset{\alpha > 1}{\sum}} 2 \left( \mathring{\lambda}_i^2 +
   \mathring{\lambda}_j^2 \right) \left( \mathring{h}^{\alpha}_{i j} \right)^2
   \leqslant 2 \sum_{\alpha > 1} \Bigg( \sum_i \mathring{\lambda}_i^2 \Bigg)
   \Bigg( \sum_{i \neq j} \left( \mathring{h}^{\alpha}_{i j} \right)^2 \Bigg)
   . \]
Thus we get
\[ \sum_{\alpha > 1} \Bigg( \sum_i \mathring{\lambda}_i
   \mathring{h}^{\alpha}_{i i} \Bigg)^2 + \underset{i \neq
   j}{\underset{\alpha > 1}{\sum}} \left( \left( \mathring{\lambda}_i -
   \mathring{\lambda}_j \right) \mathring{h}^{\alpha}_{i j} \right)^2
   \leqslant 2 \Bigg( \sum_i \mathring{\lambda}_i^2 \Bigg) \Bigg( \Psum \Bigg) . \]
Using Proposition \ref{pmatrix}, we obtain
\[ \sum_{\alpha, \beta > 1} \Bigg( \sum_{i, j} \mathring{h}^{\alpha}_{i j}
   \mathring{h}^{\beta}_{i j} \Bigg)^2 + \underset{i, j}{\underset{\alpha,
   \beta > 1}{\sum}} \Bigg( \sum_k \left( \mathring{h}^{\alpha}_{i k}
   \mathring{h}^{\beta}_{j k} - \mathring{h}^{\alpha}_{j k}
   \mathring{h}^{\beta}_{i k} \right) \Bigg)^2 \leqslant \tilde{\xi}
   \Bigg( \Psum \Bigg)^2, \]
where $\tilde{\xi} = 1 + \frac{1}{2} \tmop{sgn} (p - 2)$.
Therefore, we obtain
\[ R_1 \leqslant \Bigg( \sum_i \lambda_i^2 \Bigg)^2 + 4 \Bigg( \sum_i
   \mathring{\lambda}_i^2 \Bigg) \Bigg( \Psum \Bigg) + \tilde{\xi} \Bigg( \Psum \Bigg)^2 . \]
We set
\[ P = \Psum . \]
This gives $\sum_i \lambda_i^2 = | h |^2 - P$ and $\sum_i
\mathring{\lambda}_i^2 = \normho^2 - P$. Then we obtain
\begin{equation}
  R_1 \leqslant | h |^4 + \left( 2 \normho^2 - \frac{2}{n} | H |^2 \right) P -
  \xi^{- 1} P^2,
  \label{R1nogreater}
\end{equation}
where $ \xi = \frac{2}{4 - \tmop{sgn} (p - 2)} $.
We also have
\begin{equation}
  R_2 = \sum_i (| H | \lambda_i)^2 = | H |^2 (| h |^2 - P) . \label{R2eq}
\end{equation}

\section{Pointwisely pinched ancient solutions in curved space forms}

In this section we consider the rigidity of ancient solutions of mean curvature flow in curved space forms under pointwise curvature pinching  conditions.

\subsection{Ancient solutions in spheres}\

We first consider codimension 1 ancient solutions and prove the following

\begin{theorem}\label{codim 1 sphere}
  Let $F : M^n \times (- \infty, 0) \rightarrow \mathbb{S}^{n + 1}$ be a
  compact ancient solution of mean curvature flow in the unit sphere. If
  \[ \sup_{M \times (- \infty, 0)} \left( | h |^2 - \min \left\{ \tfrac{3}{n +
     2}, \tfrac{4 (n - 1)}{n (n + 2)} \right\}  | H |^2 \right) < n, \]
  then $M_t$ is either a shrinking spherical cap or a totally geodesic sphere.
\end{theorem}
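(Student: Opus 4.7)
The plan is to analyze the Huisken-type test function
\[ f := |h|^2 - \sigma |H|^2, \qquad \sigma := \min\left\{\tfrac{3}{n+2},\ \tfrac{4(n-1)}{n(n+2)}\right\}, \]
along the ancient flow. Specializing Lemma \ref{evo}(i, ii) to codimension one (so $R_1 = |h|^4$ and $R_2 = |H|^2|h|^2$) gives
\[ \partial_t f = \Delta f - 2(|\nabla h|^2 - \sigma |\nabla H|^2) + 2(|h|^2 - n) f + (4 - 4n\sigma)|H|^2. \]
Since $\sigma \leqslant 3/(n+2)$ on both candidates inside the minimum, Huisken's inequality (\ref{Delh2}) makes $|\nabla h|^2 - \sigma|\nabla H|^2 \geqslant 0$, and a short algebraic check shows $4 - 4n\sigma \leqslant 0$ for $n \geqslant 1$ in both cases. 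Dropping these two non-positive contributions yields the reaction--diffusion inequality
\[ \partial_t f \leqslant \Delta f + 2(|h|^2 - n) f. \]

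Setting $\varepsilon := n - \sup_{M \times (-\infty, 0)} f > 0$ and using $|h|^2 = f + \sigma|H|^2$, the hypothesis rewrites as $|h|^2 - n \leqslant -\varepsilon + \sigma|H|^2$, hence
\[ \partial_t f \leqslant \Delta f - 2\varepsilon f + 2\sigma|H|^2 f. \]
The main obstacle is the coupling $|H|^2 f$: its sign is indefinite where $f > 0$, and the hypothesis does not a priori bound $|H|^2$, since $\sigma \geqslant 1/n$ for $n \geqslant 2$ means the Cauchy--Schwarz bound $|H|^2 \leqslant n|h|^2$ gives no pinching on $|H|^2$. The pointwise parabolic maximum principle therefore does not directly yield decay of $f_+$.

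To overcome this, I would adapt the Andrews--Baker Stampacchia $L^p$--iteration to the backwards-infinite time horizon. Evolving $J_p(t) := \int_{M_t} f_+^p \, \mathd \mu_t$ for $p$ large, one combines the reaction--diffusion inequality above with the Michael--Simon Sobolev inequality and a Kato identity to close a recursion on $p$. The key point is that the finite-time ``initial data'' $J_p(t_0)$ is replaced by the uniform-in-$t$ sup bound coming from the hypothesis and the compactness of $M_t$; sending $t_0 \to -\infty$ and using the spacetime integrability of $|H|^2$ along the ancient flow (which follows from the monotonicity of the volume $\mathrm{Vol}(M_t)$), one obtains $\sup f_+ \equiv 0$. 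Equivalently, $|h|^2 \leqslant \sigma|H|^2$ holds pointwise on $M \times (-\infty, 0)$.

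Finally, I would substitute this stronger pinching into Lemma \ref{evo}(iii), which in codimension one reads $\partial_t |\mathring h|^2 \leqslant \Delta |\mathring h|^2 + 2(|h|^2 - n)|\mathring h|^2$ (no $|H|^2$ term appears since $2R_1 - \tfrac{2}{n}R_2 = 2|h|^2 |\mathring h|^2$). Iterating the same Stampacchia scheme on $|\mathring h|^2$, or applying a strong parabolic maximum principle that now benefits from the bound $|\mathring h|^2 \leqslant (\sigma - 1/n)|H|^2$, improves the pinching to $\mathring h \equiv 0$ along the flow. Since the compact totally umbilical hypersurfaces of $\mathbb{S}^{n+1}$ are precisely the round spheres (including the totally geodesic ones), the ancient solution $M_t$ must be a static totally geodesic sphere or a family of shrinking spherical caps, giving Theorem \ref{codim 1 sphere}. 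The main technical obstacle throughout is the backwards-Stampacchia iteration; the sharpness of the two candidate values of $\sigma$ is precisely what makes the Sobolev recursion close.
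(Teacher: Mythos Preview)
Your computation of the evolution equation for the difference $f = |h|^2 - \sigma|H|^2$ is correct, and you correctly isolate the obstruction: the reaction term $2\sigma|H|^2 f$ on the set $\{f>0\}$, with no a priori bound on $|H|^2$. The problem is that your proposed resolution by a backwards Stampacchia/$L^p$ iteration does not close. Writing out the $L^p$ inequality,
\[
\frac{\mathd}{\mathd t}\int_{M_t} f_+^p\,\mathd\mu_t
\;\leqslant\;
-\tfrac{4(p-1)}{p}\int_{M_t}\bigl|\nabla f_+^{p/2}\bigr|^2\,\mathd\mu_t
\;-\;2p\varepsilon\int_{M_t} f_+^p\,\mathd\mu_t
\;+\;(2p\sigma-1)\int_{M_t}|H|^2 f_+^p\,\mathd\mu_t,
\]
one sees that the bad term carries coefficient $2p\sigma-1>0$ for every $p>1/(2\sigma)$. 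The Michael--Simon inequality applied to $f_+^{p/2}$ yields a Sobolev bound whose right-hand side itself contains $\int |H|^2 f_+^p$, so the argument is circular; absorbing via H\"older would require smallness of $\|H\|_{L^n}$, which is not assumed. The spacetime identity $\int_{t_1}^{t_2}\!\!\int_{M_t}|H|^2\,\mathd\mu_t\,\mathd t=\mathrm{Vol}(M_{t_1})-\mathrm{Vol}(M_{t_2})$ gives only $L^1$-in-time control of the spatial $L^1$ norm of $|H|^2$, which is far from what the recursion needs. Finally, your claim that ``the sharpness of the two candidate values of $\sigma$ is precisely what makes the Sobolev recursion close'' is not substantiated; those constants are tuned for the \emph{gradient} estimate \eqref{Delh2}, not for any Sobolev absorption.

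The paper avoids this obstruction entirely by working not with the difference $|h|^2-\sigma|H|^2$ but with the \emph{ratio}
\[
f=\frac{|\mathring h|^2}{\gamma|H|^2+a},\qquad \gamma=\tfrac{2(n-1)}{n(n+2)},\quad 0<a<n,
\]
where $a$ is chosen from the hypothesis so that $|\mathring h|^2-k|H|^2<a<n$ with $k=\sigma-\tfrac1n$. The point of the denominator is that it already scales like $|H|^2$, so the $|H|^2$ coupling is built into the quotient rule rather than appearing as an uncontrolled reaction term. A direct computation then gives
\[
\Bigl(\tfrac{\partial}{\partial t}-\Delta\Bigr)f\;\leqslant\;\frac{2\gamma}{\gamma|H|^2+a}\langle\nabla f,\nabla|H|^2\rangle+2f\left[|h|^2-n-\frac{\gamma(|H|^2|h|^2+n|H|^2)}{\gamma|H|^2+a}\right],
\]
and an elementary algebraic manipulation (using only $|h|^2\leqslant 2\gamma|H|^2+a$) shows the bracket is $\leqslant a-n<0$. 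The pointwise parabolic maximum principle then gives $\max_{M_t}f\leqslant\mathe^{-2(n-a)(t-s)}\max_{M_s}f$ for all $s<t$; sending $s\to-\infty$ forces $f\equiv0$, hence $\mathring h\equiv0$, in one step. No integral estimates, no Stampacchia iteration, and no second stage are needed.
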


\begin{proof}
  Set
  \[ \gamma = \frac{2 (n - 1)}{n (n + 2)} \qquad\text{and}\qquad k = \min \left\{ \gamma, 2
     \gamma - \frac{1}{n} \right\} . \]
  From the pinching condition, there exists a positive constant $a$, such that
  for all $t<0$ there holds
  \[ | \mathring{h} |^2 - k | H |^2 < a < n. \]
  We study the function
  \[ f = \frac{\normho^2}{\gamma | H |^2 + a} . \]
  Notice that $f$ is always less than 1.

  We have
  \[ \dt f = \frac{1}{\gamma | H |^2 + a} \left( \dt \normho^2 - \gamma f \dt
     | H |^2 \right) \]
  and
  \[ \Delta f = \frac{1}{\gamma | H |^2 + a} \left( \Delta \normho^2 - \gamma
     f \Delta | H |^2 - 2 \gamma \langle \nabla f, \nabla | H |^2 \rangle
     \right) . \]
  Then we get from the evolution equations that
  \begin{eqnarray}
    &&\left( \dt - \Delta \right) f \nonumber\\& = & \frac{2 \gamma}{\gamma | H |^2 + a}
    \langle \nabla f, \nabla | H |^2 \rangle - \frac{2}{\gamma | H |^2 + a} (|
    \nabla \mathring{h} |^2 - \gamma f | \nabla H |^2) \nonumber\\
    &  & + 2 f \left[ | h |^2 - n - \frac{\gamma}{\gamma | H |^2 + a} (| H
    |^2 | h |^2 + n | H |^2) \right] .  \label{dtfinS1}
  \end{eqnarray}
  Firstly, we estimate the gradient terms in (\ref{dtfinS1}). From {\eqref{dh2}}
  we have
  \[ | \nabla \mathring{h} |^2 \geqslant \gamma | \nabla H |^2 \geqslant \gamma f |
     \nabla H |^2 . \]
  Secondly, we estimate the reaction terms.
  \begin{eqnarray*}
    &  & | h |^2 - n - \frac{\gamma}{\gamma | H |^2 + a} (| H |^2 | h |^2 + n |
    H |^2)\\
    & = & \frac{a | h |^2 - n (2 \gamma | H |^2 + a)}{\gamma | H |^2 + a}\\
    & \leqslant & \frac{a (2 \gamma | H |^2 + a) - n (2 \gamma | H |^2 +
    a)}{\gamma | H |^2 + a}\\
    & = & \frac{(a - n) (2 \gamma | H |^2 + a)}{\gamma | H |^2 + a}\\
    & \leqslant & a - n.
  \end{eqnarray*}
  Letting $\delta = n - a$, we obtain
  \[ \left( \dt - \Delta \right) f \leqslant \frac{2 \gamma}{\gamma | H |^2 +
     a} \langle \nabla f, \nabla | H |^2 \rangle - 2 \delta f. \]
  Applying the maximum principle, we get
  \[ \forall s < t, \quad \max_{M_t} f \leqslant \mathe^{-2 \delta (t - s)}
     \max_{M_s} f . \]
  Letting $s \rightarrow - \infty$, we obtain $\max_{M_t} f = 0$, i.e., $M_t$
  is a totally umbilical sphere.
\end{proof}

To derive the rigidity theorem in higher codimensions, we need the following inequality.

\begin{lemma}
  \label{Gxy}Let $b$ and $\xi$ be positive numbers. Define
  \[ G (x, y) = \frac{2 b \left( \frac{4}{3} x + b \right) + x (y - 1)}{x + 2
     b} - \frac{x y + \xi^{- 1} y^2}{\frac{1}{3} x + b} + 2 y - 1. \]
  If $\frac{1}{2} \leqslant \xi < b^{- 1} - 1$, then
  \[ \sup_{x, y \in [0, + \infty)} G (x, y) < 0. \]
\end{lemma}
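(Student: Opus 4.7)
The plan is to fix $x\geq 0$ and study $G(x,\cdot)$ as a function of $y$ alone. Since the coefficient of $y^{2}$ in $G(x,y)$ equals $-\xi^{-1}/(\tfrac{1}{3}x+b)$, which is strictly negative, the map $y\mapsto G(x,y)$ is a concave parabola whose unique maximizer $y^{\ast}(x)$ lies in $(0,\infty)$; indeed a direct computation gives $\partial_{y}G(x,0)=x/(x+2b)-3x/(x+3b)+2=(7bx+12b^{2})/[(x+2b)(x+3b)]>0$. It therefore suffices to prove $G^{\ast}(x):=G(x,y^{\ast}(x))<0$ for every $x\geq 0$, and then to upgrade this pointwise negativity to strict negativity of the supremum via continuity and the behaviour at infinity.

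Clearing denominators, the inequality $G^{\ast}(x)<0$ is equivalent to a cubic polynomial inequality $Q(x)<0$ on $[0,\infty)$, where $Q$ is obtained from $G(x,y^{\ast}(x))$ by multiplying through by the positive quantity $12(x+2b)^{2}(x+3b)$. An explicit expansion (combining the reaction term with the completed square of the $y$-quadratic) yields
\[ Q(x)=8(4b-3)x^{3}+\bigl[(184+49\xi)b^{2}-144b\bigr]x^{2}+\bigl[(312+168\xi)b^{3}-264b^{2}\bigr]x+144b^{3}\bigl[b(1+\xi)-1\bigr]. \]
The crux of the proof is that, under the hypotheses $\xi\geq\tfrac{1}{2}$ and $b<1/(\xi+1)$, all four coefficients of $Q$ are strictly negative, so $Q(x)<0$ for every $x\geq 0$. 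The $x^{3}$-coefficient is negative because $b<1/(\xi+1)\leq\tfrac{2}{3}<\tfrac{3}{4}$; the constant term is negative by the hypothesis $b(1+\xi)<1$; and the two middle coefficients reduce, after invoking $b<1/(\xi+1)$, to the numerical inequalities $49+135/(\xi+1)<144$ and $168+144/(\xi+1)<264$, both of which hold for $\xi\geq\tfrac{1}{2}$.

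Once $G^{\ast}(x)<0$ is established pointwise on $[0,\infty)$, the strict negativity of the supremum follows: $G^{\ast}$ is continuous on $[0,\infty)$ with $G^{\ast}(0)=b(1+\xi)-1<0$ and the explicit limit $G^{\ast}(x)\to\tfrac{8}{3}b-2<0$ as $x\to\infty$, so $G^{\ast}$ is bounded above by a strictly negative constant, hence so is $G$. The main obstacle is the tightness of the $x^{1}$-coefficient bound, which degenerates to equality in the extremal case $\xi=\tfrac{1}{2}$, $b=\tfrac{2}{3}$; both the lower bound $\xi\geq\tfrac{1}{2}$ and the strict upper bound $b<1/(\xi+1)$ are essential, and the strictness must be tracked carefully through the arithmetic to keep all four coefficient signs strictly negative.
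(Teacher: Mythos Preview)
Your proof is correct and follows essentially the same route as the paper: both write $G(x,\cdot)$ as a downward quadratic in $y$, compute the vertex value, clear the positive denominator $12(x+2b)^2(x+3b)$, and show that all four coefficients of the resulting cubic are strictly negative under $\tfrac12\le\xi<b^{-1}-1$. Your write-up is slightly more careful than the paper's in checking $y^\ast(x)>0$ and in handling the passage from pointwise negativity to a strictly negative supremum; note only that your stated inequality $168+144/(\xi+1)<264$ is actually an equality at $\xi=\tfrac12$ (as you acknowledge in the closing paragraph), so the strictness for the $x^1$-coefficient comes entirely from $b<1/(\xi+1)$ rather than from that numerical bound.
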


\begin{proof}
  We write $G (x, y)$ as a quadratic function of $y$.
  \[ G (x, y) = \frac{2 b \left( \frac{4}{3} x + b \right) - x}{x + 2 b} - 1
     + \frac{b (7 x + 12 b)}{(x + 2 b)  (x + 3 b)} y - \frac{3 \xi^{- 1}}{x +
     3 b} y^2 . \]
  So, for $x \geqslant 0$, we have
  \begin{eqnarray}
    \sup_{y \in \mathbb{R}} G (x, y) & = & \frac{2 b \left( \frac{4}{3} x + b
    \right) - x}{x + 2 b} - 1 + \frac{\xi}{12 (x + 3 b)} \left[ \frac{b (7 x +
    12 b)}{x + 2 b} \right]^2  \nonumber\\
    & = & [12 (x + 2 b)^2 (x + 3 b)]^{- 1} \times \label{Gxyleq}\\
    &  & \Big[144 b^3 (b \xi + b - 1) + 24 b^2 (7 b \xi + 13 b - 11) x
    \nonumber\\
    &  &  + b (49 b \xi + 184 b - 144) x^2 + 8 (4 b - 3) x^3\Big] .
    \nonumber
  \end{eqnarray}
  From $\frac{1}{2} \leqslant \xi < b^{- 1} - 1$, we have
  \[ b \xi + b < 1, \quad 7 b \xi + 13 b < 11, \quad 49 b \xi + 184 b < 144
     \text{\quad and\quad} b < \frac{3}{4} . \]
  So the coefficients of $x$ on the right hand side of {\eqref{Gxyleq}} are
  all negative. Therefore, we obtain $\sup_{x, y \in [0, + \infty)} G (x, y) <
  0.$
\end{proof}

\begin{theorem}\label{codim 2-2}
  Let $F : M^n \times (- \infty, 0) \rightarrow \mathbb{S}^{n + p}$ $(p
  \geqslant 2)$ be a compact ancient solution of mean curvature flow in the
  unit sphere. Suppose
  \[ \sup_{M \times (- \infty, 0)} \left( | h |^2 - \frac{4}{3 n} | H |^2
     \right) < \frac{n}{\xi + 1}, \]
  where $\xi = \frac{2}{4 - \tmop{sgn} (p - 2)}$. Then $M_t$ is either a
  shrinking spherical cap or a totally geodesic sphere.
\end{theorem}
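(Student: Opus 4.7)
The plan is to mimic the argument in the proof of Theorem \ref{codim 1 sphere}. By the pinching hypothesis, pick a constant $a$ with
\[
\sup_{M\times(-\infty,0)}\bigl(|h|^2 - \tfrac{4}{3n}|H|^2\bigr) \;<\; a \;<\; \tfrac{n}{\xi+1},
\]
and set $b = a/n$. Introduce the auxiliary function
\[
f \;=\; \frac{|\mathring{h}|^2}{\frac{1}{3n}|H|^2 + a},
\]
which is strictly less than $1$ throughout $M\times(-\infty,0)$, since $|\mathring{h}|^2 - \tfrac{1}{3n}|H|^2 = |h|^2 - \tfrac{4}{3n}|H|^2 < a$. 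Using Lemma \ref{evo}\,(ii)-(iii) together with \eqref{dh2}, and noting that $\tfrac{1}{3n}<\gamma := \tfrac{2(n-1)}{n(n+2)}$ so that the gradient contribution $-2|\nabla\mathring{h}|^2 + \tfrac{2}{3n}f|\nabla H|^2$ is non-positive, the same manipulation leading to \eqref{dtfinS1} yields
\[
\left( \dt - \Delta \right) f \;\leqslant\; \frac{2/(3n)}{\frac{1}{3n}|H|^2 + a}\,\langle \nabla f, \nabla |H|^2\rangle \;+\; \frac{2f}{\frac{1}{3n}|H|^2 + a}\,\mathcal{Q},
\]
where $\mathcal{Q}$ collects the reaction terms built from $R_1$, $R_2$, $|\mathring{h}|^2$ and $|H|^2$.

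The core step is to show $\mathcal{Q} \leqslant -\delta\bigl(\tfrac{1}{3n}|H|^2 + a\bigr)$ uniformly for some $\delta>0$. Substituting \eqref{R1nogreater} for $R_1$ and \eqref{R2eq} for $R_2$ into $\mathcal{Q}$, introducing the dimensionless variables $x = |H|^2/n^2$, $y = |\mathring{h}|^2/n$ (with $b=a/n$), and completing the square in $P$ to absorb the term $-\xi^{-1}P^2$ coming from \eqref{R1nogreater}, this inequality reduces to the purely algebraic statement that $G(x,y) < 0$ for all $x,y\geqslant 0$, where $G$ is the function of Lemma \ref{Gxy}. The definition $\xi = \tfrac{2}{4-\tmop{sgn}(p-2)}$ gives $\xi \geqslant \tfrac{1}{2}$ for $p\geqslant 2$, and our choice $b<\tfrac{1}{\xi+1}$ is equivalent to $\xi < b^{-1}-1$; Lemma \ref{Gxy} therefore applies and produces a uniform gap $\sup_{x,y\geqslant 0} G(x,y) \leqslant -2\delta$ for some $\delta>0$.

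With the resulting inequality $\left( \dt - \Delta \right) f \leqslant \tfrac{2/(3n)}{\frac{1}{3n}|H|^2 + a}\langle \nabla f, \nabla |H|^2\rangle - 2\delta f$ in hand, the maximum principle gives $\max_{M_t} f \leqslant \mathe^{-2\delta(t-s)}\max_{M_s} f$ for all $s<t<0$; letting $s\to-\infty$ and using $f<1$ forces $\max_{M_t}f = 0$ for every $t<0$, so $|\mathring{h}|\equiv 0$. Each $M_t$ is therefore totally umbilical, i.e., a shrinking spherical cap or a totally geodesic sphere. I expect the main obstacle to be the algebraic reduction of $\mathcal{Q}$ to $G(x,y)$: one must track the $P$-dependent contributions from \eqref{R1nogreater} through the completion of the square, verify that the two distinct denominators $x+2b$ and $\tfrac{1}{3}x+b$ of $G$ emerge naturally, and confirm that the denominator choice $\tfrac{1}{3n}|H|^2+a$ (rather than $\gamma|H|^2+a$ as in the codimension-one case) is exactly what pairs with the pinching constant $\tfrac{n}{\xi+1}$ to make the hypothesis $\xi<b^{-1}-1$ of Lemma \ref{Gxy} tight.
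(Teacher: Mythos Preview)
Your overall strategy matches the paper's, but two concrete choices in the setup prevent the reduction to Lemma~\ref{Gxy} from going through.

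First, the variable $y$ in $G(x,y)$ is $P/n$, not $|\mathring{h}|^2/n$, and there is no completion of the square in $P$. One keeps $P$ as a free parameter, bounds the reaction terms by $2nf\,G\bigl(|H|^2/n^2,\,P/n\bigr)$, and then invokes Lemma~\ref{Gxy} to control the quadratic in $y=P/n$. The $-\xi^{-1}y^2$ in $G$ is precisely the $-\xi^{-1}P^2$ coming from \eqref{R1nogreater}, which forces this identification.

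Second, and decisively, the denominator $\tfrac{1}{3n}|H|^2+a$ in your $f$ is the wrong choice. The two denominators in $G$ have \emph{different} origins: $\tfrac13 x+b$ arises from replacing $|\mathring{h}|^2$ by its pinching upper bound $\tfrac{1}{3n}|H|^2+bn$ in the quotient $\bigl(\tfrac1n|H|^2P+\xi^{-1}P^2\bigr)/|\mathring{h}|^2$, whereas $x+2b$ comes from the denominator of $f$ itself. The paper therefore takes $f=|\mathring{h}|^2/(|H|^2+2bn^2)$. With your denominator the two fractions collapse to a common denominator (since $3n\bigl(\tfrac{1}{3n}|H|^2+bn\bigr)=|H|^2+3bn^2$), and the reaction bound becomes not $G$ but
\[
\tilde G(x,y)=\frac{4bx+3b^2+x(y-1)}{x+3b}-\frac{xy+\xi^{-1}y^2}{\tfrac13 x+b}+2y-1,
\]
with $y=P/n$. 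This $\tilde G$ is \emph{not} uniformly negative under the stated hypothesis: at $y=0$ and $x\to\infty$ one has $\tilde G(x,0)\to 4b-2$, which is nonnegative once $b\geqslant\tfrac12$. Since the pinching only forces $b<\tfrac{1}{\xi+1}=\tfrac23$ when $p=2$, your inequality $\mathcal Q\leqslant-\delta\bigl(\tfrac{1}{3n}|H|^2+a\bigr)$ fails for $b\in[\tfrac12,\tfrac23)$, and the maximum-principle step cannot close. Replacing your denominator by $|H|^2+2bn^2$ (which also sharpens $f<1$ to $f<\tfrac{1}{2n}$) produces exactly the $G$ of Lemma~\ref{Gxy}, and the argument then goes through as in the paper.
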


\begin{proof}
  From the pinching condition, there exists a positive constant $b$, such that
  for all $t < 0$ there holds
  \begin{equation}
    | \mathring{h} |^2 - \frac{1}{3 n} | H |^2 < b n < \frac{n}{\xi + 1} .
    \label{existb}
  \end{equation}

  We study the function
  \[ f = \frac{\normho^2}{| H |^2 + 2 b n^2} . \]
  It follows from {\eqref{existb}} that $f < \frac{1}{2 n}$.

  We have
  \[ \dt f = \frac{1}{| H |^2 + 2 b n^2} \left( \dt \normho^2 - f \dt | H |^2
     \right) \]
  and
  \[ \Delta f = \frac{1}{| H |^2 + 2 b n^2} \left( \Delta \normho^2 - f \Delta
     | H |^2 - 2 \langle \nabla f, \nabla | H |^2 \rangle \right) . \]
  From the evolution equations, we get
  \begin{eqnarray}
    &&\left( \dt - \Delta \right) f \nonumber\\& = & \frac{2}{| H |^2 + 2 b n^2} \langle
    \nabla f, \nabla | H |^2 \rangle - \frac{2}{| H |^2 + 2 b n^2} (| \nabla
    \mathring{h} |^2 - f | \nabla H |^2) \nonumber\\
    &  & + \frac{2}{| H |^2 + 2 b n^2} \left( R_1 - \frac{1}{n} R_2 \right) -
    2 f \left[ n + \frac{1}{| H |^2 + 2 b n^2} (R_2 + n | H |^2) \right] .
    \label{dtfinS}
  \end{eqnarray}
  Firstly, we estimate the gradient terms in (\ref{dtfinS}). From {\eqref{dh2}}
  we have
  \[ | \nabla \mathring{h} |^2 \geqslant \frac{1}{2 n} | \nabla H |^2 \geqslant f |
     \nabla H |^2 . \]
  Secondly, we estimate the reaction terms. From {\eqref{R1nogreater}},
  {\eqref{R2eq}} and the pinching condition, we have
  \begin{eqnarray*}
    R_1 - \frac{1}{n} R_2 & \leqslant & | h |^2 \normho^2 + 2 \normho^2 P -
    \frac{1}{n} |H|^2 P - \xi^{- 1} P^2\\
    & \leqslant & \normho^2 \left( | h |^2 + 2 P - \frac{\frac{1}{n} |H|^2
    P + \xi^{- 1} P^2}{\frac{1}{3 n} | H |^2 + b n} \right) .
  \end{eqnarray*}
  Hence we get
  \begin{eqnarray}
    &  & \frac{2}{| H |^2 + 2 b n^2} \left( R_1 - \frac{1}{n} R_2 \right) - 2
    f \left[ n + \frac{1}{| H |^2 + 2 b n^2} (R_2 + n | H |^2) \right]
    \label{2kH2bR1nR2}\\
    & \leqslant & 2 f \left[ | h |^2 + 2 P - \frac{\frac{1}{n} |H|^2 P +
    \xi^{- 1} P^2}{\frac{1}{3 n} | H |^2 + b n} - n - \frac{| H |^2 (| h |^2 -
    P + n)}{| H |^2 + 2 b n^2} \right]  \nonumber\\
    & = & 2 f \left[ \frac{2 b n^2 | h |^2 + | H |^2 (P - n)}{| H |^2 + 2 b
    n^2} - \frac{\frac{1}{n} |H|^2 P + \xi^{- 1} P^2}{\frac{1}{3 n} | H |^2
    + b n} + 2 P - n \right] \nonumber\\
    & \leqslant & 2 f \left[ \frac{2 b n^2 \left( \frac{4}{3 n} | H |^2 + b n
    \right) + | H |^2 (P - n)}{| H |^2 + 2 b n^2} - \frac{\frac{1}{n} |H|^2
    P + \xi^{- 1} P^2}{\frac{1}{3 n} | H |^2 + b n} + 2 P - n \right]
    \nonumber\\
    & = & 2 n f\, G \left( \frac{1}{n^2} | H |^2, \frac{1}{n} P \right) . \nonumber
  \end{eqnarray}
  Here $G$ is the function
  defined in Lemma \ref{Gxy}. Since $\frac{1}{2} \leqslant \xi < b^{- 1} - 1$,
  $G \left( \frac{1}{n^2} | H |^2, \frac{1}{n} P \right)$ has a negative upper
  bound $- \delta$.

  Inserting {\eqref{2kH2bR1nR2}} into (\ref{dtfinS}), we obtain
  \[ \left( \dt - \Delta \right) f \leqslant \frac{2}{| H |^2 + 2 b n^2} \langle
     \nabla f, \nabla | H |^2 \rangle - 2 n \delta f. \]
  Applying the maximum principle, we get
  \[ \forall s < t, \quad \max_{M_t} f \leqslant \mathe^{- 2 n \delta (t - s)}
     \max_{M_s} f. \]
  Letting $s \rightarrow - \infty$, we obtain $\max_{M_t} f = 0$, i.e., $M_t$
  is a totally umbilical sphere.
\end{proof}

If $p \geqslant 3$, we verify the following theorem with different pinching coefficients.
\begin{theorem}\label{codim 2-1}
  Let $F : M^n \times (- \infty, 0) \rightarrow \mathbb{S}^{n + p}$ $(p
  \geqslant 3)$ be a compact ancient solution of mean curvature flow in the
  unit sphere. If
  \[ \sup_{M \times (- \infty, 0)} \normho^2 < \frac{2 n}{3}, \]
  then $M_t$ is either a shrinking spherical cap or a totally geodesic sphere.
\end{theorem}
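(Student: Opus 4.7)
The plan follows the strategy of Theorem~\ref{codim 2-2}: construct an auxiliary function whose maximum over $M_t$ decays exponentially as $t \to -\infty$ via the parabolic maximum principle, and infer $\normho^2 \equiv 0$. The new feature is that the pinching $\sup \normho^2 < \frac{2n}{3}$ contains no $|H|^2$ term, so the direct choice $f = \normho^2$ is obstructed by an unsigned reaction term $\frac{2}{n}|H|^2(\normho^2 - P)$ appearing in the evolution of $\normho^2$ once one applies (\ref{R12R2/n}) and (\ref{R2eq}).

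Fix $\eta > 0$ with $\sup \normho^2 < \frac{2n}{3} - \eta$, and consider $f = \normho^2 - c|H|^2$ for a positive constant $c$. By Lemma~\ref{evo}(i) and (iii),
\[
\left( \dt - \Delta \right) f = -2\bigl( |\nabla \mathring{h}|^2 - c|\nabla H|^2 \bigr) + 2\bigl( R_1 - (\tfrac{1}{n} + c) R_2 \bigr) - 2n\normho^2 - 2cn|H|^2,
\]
and the gradient piece is non-positive when $c \leqslant \frac{2(n-1)}{n(n+2)}$, by (\ref{dh2}). The inequality (\ref{R12R2/n}) gives $R_1 - (\frac{1}{n}+c)R_2 \leqslant \frac{3}{2}\normho^4 + (\frac{1}{n} - c)R_2 - \frac{1}{n^2}|H|^4$. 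Choosing $c = \frac{1}{n}$ (which is admissible when $n \geqslant 4$) kills the middle term outright; for $n = 2, 3$ one instead picks $c \in \bigl[\frac{2}{5n}, \frac{2(n-1)}{n(n+2)}\bigr]$, bounds $(\frac{1}{n}-c)R_2$ via $R_2 \leqslant |H|^2|h|^2$, and absorbs the residual $|H|^2\normho^2$ term using the pinching $\normho^2 < \frac{2n}{3}$. In every case, substituting $\normho^2 = f + c|H|^2$ and invoking the pinching in the form $3\normho^4 - 2n\normho^2 \leqslant -3\eta \normho^2$, the reaction collapses to $-3\eta f$ modulo non-positive terms in $|H|^2$ and $|H|^4$. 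Hence $(\dt - \Delta) f \leqslant -3\eta f$, and since $f \leqslant \normho^2 < \frac{2n}{3}$ is uniformly bounded above, the maximum principle yields $\max_{M_t} f \leqslant \mathe^{-3\eta(t-s)} \max_{M_s} f \to 0$ as $s \to -\infty$, so $\normho^2 \leqslant c|H|^2$ on every $M_t$.

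The main obstacle is upgrading $\normho^2 \leqslant c|H|^2$ to $\normho^2 \equiv 0$. One already knows from the evolution of $|H|^2$ (together with the Cauchy--Schwarz bound $R_2 \geqslant \frac{1}{n}|H|^4$) that $\min_{M_t}|H|^2 \to 0$ backwards in time, so at some point of each $M_t$ the quantity $\normho^2$ is arbitrarily small. I would propagate this to a uniform bound either by iterating the above scheme with a smaller constant $c$, now that the residual $|H|^2\normho^2$ contributions can be controlled by the just-established bound, or by a backwards blow-down analysis at $t = -\infty$ whose smooth subsequential limits are minimal submanifolds of $\mathbb{R}^{n+p}$ with $|h|^2 \leqslant \frac{2n}{3}$, and are therefore totally geodesic by the static Li--Li rigidity theorem. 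Either route forces $\normho^2 \equiv 0$; each $M_t$ is then totally umbilical, and the classification of umbilical submanifolds in $\mathbb{S}^{n+p}$ yields the stated trichotomy.
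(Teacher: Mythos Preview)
Your first step is sound: the differential inequality for $f = \normho^2 - c|H|^2$ does close as you claim, with $c = \tfrac{1}{n}$ for $n \geqslant 4$ and $c \in [\tfrac{2}{5n}, \tfrac{2(n-1)}{n(n+2)}]$ for $n = 2, 3$, and the maximum principle then gives $\normho^2 \leqslant c|H|^2$ everywhere. The iteration you propose for the upgrade can also be made precise: once $\normho^2 \leqslant c_k |H|^2$ is known, bounding $R_2 \leqslant (c_k + \tfrac{1}{n})|H|^4$ in the reaction lets the same argument run with any $c_{k+1} \geqslant c_k/(1 + n c_k)$; since $1/c_k = 1/c_0 + nk \to \infty$ one obtains $\normho^2 \equiv 0$ pointwise without invoking any external theorem. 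Your blow-down alternative, however, is not coherent: a backwards limit of a compact ancient solution in $\mathbb{S}^{n+p}$ does not naturally sit in $\mathbb{R}^{n+p}$, the Li--Li rigidity theorem concerns minimal submanifolds of spheres rather than of Euclidean space, and there are no compact minimal submanifolds of $\mathbb{R}^{n+p}$ at all. That route should be discarded.

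The paper takes a shorter path that avoids the upgrade problem entirely: instead of a difference it uses the quotient
\[
  f \;=\; \frac{\normho^2}{|H|^2 + \tfrac{5}{3}n^2}.
\]
The pinching $\normho^2 < \tfrac{2n}{3}$ gives $f < \tfrac{2}{5n}$, which is exactly what is needed to control the gradient term via $|\nabla\mathring{h}|^2 \geqslant \tfrac{2}{5n}|\nabla H|^2$ (valid for all $n \geqslant 2$). A direct computation with (\ref{R12R2/n}) and (\ref{R2eq}) then yields
\[
  \left(\dt - \Delta\right) f \;\leqslant\; \frac{2}{|H|^2 + \tfrac{5}{3}n^2}\,\langle\nabla f, \nabla|H|^2\rangle \;+\; \Bigl(\normho^2 - \tfrac{2n}{3}\Bigr) f,
\]
and since $f = 0$ is equivalent to $\normho^2 = 0$, the maximum principle finishes in one stroke, with no iteration and no case split on $n$. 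Your approach trades the algebra of the quotient's evolution equation for a two-stage argument; the paper's choice of denominator $\tfrac{5}{3}n^2$ is tuned so that both the gradient and reaction terms close simultaneously.
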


\begin{proof}
  Let
  \[ f = \frac{\normho^2}{| H |^2 + \frac{5}{3} n^2} . \]
  Then we have $f < \frac{2}{5 n}$.
  
  Similarly to {\eqref{dtfinS}}, we get
  \begin{eqnarray*}
    &&\left( \dt - \Delta \right) f \\& = & \frac{2}{| H |^2 + \frac{5}{3} n^2}
    \langle \nabla f, \nabla | H |^2 \rangle - \frac{2}{| H |^2 + \frac{5}{3}
    n^2} (| \nabla \mathring{h} |^2 - f | \nabla H |^2)\\
    &  & + \frac{2}{| H |^2 + \frac{5}{3} n^2} \left( R_1 - \frac{1}{n} R_2
    \right) - 2 f \left[ n + \frac{1}{| H |^2 + \frac{5}{3} n^2} (R_2 + n | H
    |^2) \right] .
  \end{eqnarray*}
  From {\eqref{dh2}} we have
  \[ | \nabla \mathring{h} |^2 \geqslant \frac{2}{5 n} | \nabla H |^2 \geqslant f |
     \nabla H |^2 . \]
  From {\eqref{R12R2/n}} and {\eqref{R2eq}}, we have
  \begin{eqnarray*}
    &  & \frac{2}{| H |^2 + \frac{5}{3} n^2} \left( R_1 - \frac{1}{n} R_2
    \right)\\
    & \leqslant & \frac{2}{| H |^2 + \frac{5}{3} n^2} \left( \frac{3}{2} |
    \mathring{h} |^4 + \frac{1}{n} | H |^2 | \mathring{h} |^2 - \frac{1}{n} | H |^2 P
    \right)\\
    & = & 2 f \left( \frac{1}{2} | \mathring{h} |^2 + | h |^2 \right) -
    \frac{2}{n} \cdot \frac{| H |^2 P}{| H |^2 + \frac{5}{3} n^2}
  \end{eqnarray*}
  and
  \begin{eqnarray*}
    &  & 2 f \left[ n + \frac{1}{| H |^2 + \frac{5}{3} n^2} (R_2 + n | H |^2)
    \right]\\
    & = & 2 f \left[ n + \frac{1}{| H |^2 + \frac{5}{3} n^2} | H |^2 (| h |^2
    + n - P) \right]\\
    & = & 2 f \left[ | h |^2 + 2 n - \frac{\frac{5}{3} n^2}{| H |^2 +
    \frac{5}{3} n^2} (| h |^2 + n) \right] - 2 f \frac{| H |^2 P}{| H |^2 +
    \frac{5}{3} n^2}\\
    & \geqslant & 2 f \left[ | h |^2 + 2 n - \frac{\frac{5}{3} n^2}{| H |^2 +
    \frac{5}{3} n^2} \left( \frac{1}{n} | H |^2 + \frac{5 n}{3} \right)
    \right] - \frac{2}{n} \cdot \frac{| H |^2 P}{| H |^2 + \frac{5}{3} n^2}\\
    & = & 2 f \left( | h |^2 + \frac{n}{3} \right) - \frac{2}{n} \cdot
    \frac{| H |^2 P}{| H |^2 + \frac{5}{3} n^2} .
  \end{eqnarray*}
  Therefore, we get
  \[ \left( \dt - \Delta \right) f \leqslant \frac{2}{| H |^2 + \frac{5}{3}
     n^2} \langle \nabla f, \nabla | H |^2 \rangle + \left( \normho^2 -
     \frac{2 n}{3} \right) f. \]
  Set $\delta = - \sup\limits_{^{M \times (- \infty, 0)}} \left( \normho^2 - \frac{2
  n}{3} \right)$. It follows from the maximum principle that
  \[ \forall s < t, \quad \max_{M_t} f \leqslant \mathe^{- \delta (t - s)}
     \max_{M_s} f. \]
  Hence, we obtain $\max_{M_t} f = 0$, i.e., $M_t$ is a totally umbilical
  sphere.
\end{proof}

Now we give the proof of Theorem \ref{thm-2}.

\begin{proof*}{Proof of Theorem \ref{thm-2}}
  By the assumption, there exists $t_0 < 0$, such that
  \[ \sup_{M \times (- \infty, t_0)} (| h |^2 - \kappa | H |^2) < \alpha,
  \]
  where $(\kappa, \alpha) = \big( \min \{ \tfrac{3}{n + 2}, \tfrac{4 (n - 1)}{n (n + 2)}
    \}, n \big)$ for $p = 1$,
    $(\kappa, \alpha) = ( \frac{4}{3 n}, \frac{2 n}{3} )$ for $p = 2$,
    $(\kappa, \alpha) = ( \frac{4}{3 n}, \frac{3 n}{5} ) \text{ or } (
    \frac{1}{n}, \frac{2 n}{3} )$ for $p \geqslant 3$.
  Then combining the results of Theorems \ref{codim 1 sphere}, \ref{codim 2-2} and \ref{codim 2-1}, we complete the proof of
  Theorem \ref{thm-2}.
\end{proof*}

\subsection{Ancient solutions in hyperbolic spaces}\

Let $F : M^n \times (- \infty, 0) \rightarrow \mathbb{H}^{n + p}$ be a
compact ancient solution of mean curvature flow in the hyperbolic space with
constant curvature $- 1$. Suppose there exists a positive number
$\varepsilon$, such that for all $t < 0$, $M_t$ satisfies $| H | > n$ and
\[ | \mathring{h} |^2 \leqslant k | H |^2  \left( 1 - \tfrac{n^2}{| H |^2}
   \right)^{2 + \varepsilon}, \]
where
\[ k = \begin{cases}
     \frac{1}{3 n}, & p \geqslant 2 \enspace \text{and} \enspace n \geqslant
     7,\\
     \frac{n - 1}{2 n (n + 2)}, & \text{otherwise} .
   \end{cases} \]

For positive numbers $\varepsilon$ and $n$, we define a function $\varphi :
(n^2, + \infty) \rightarrow \mathbb{R}$ by
\begin{equation}
  \varphi (x) = x \left( 1 - \frac{n^2}{x} \right)^{2 + \varepsilon} .
\end{equation}
Then it has the following properties.

\begin{lemma}
  \label{phy}The function $\varphi (x)$ satisfies
  \begin{enumerateroman}
    \item $\varphi (x) < x, \varphi' (x) < 1, \varphi'' (x) > 0$,

    \item $1 - \frac{x \varphi' (x)}{\varphi (x)} = - \frac{(2 + \varepsilon)
    n^2}{x - n^2}$,

    \item $\varphi' (x) + 2 x \varphi'' (x) < 4$.
  \end{enumerateroman}
\end{lemma}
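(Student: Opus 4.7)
The natural strategy is to rewrite all three assertions in the variable $u := 1 - n^{2}/x \in (0,1)$, which is a smooth strictly increasing function of $x\in(n^{2},\infty)$ with $du/dx = n^{2}/x^{2}$. In this variable $\varphi(x) = xu^{2+\varepsilon}$, so a one-line computation yields
\[
\varphi'(x) = g(u),\qquad g(u) := (2+\varepsilon)u^{1+\varepsilon} - (1+\varepsilon)u^{2+\varepsilon},\qquad \varphi''(x) = g'(u)\,\tfrac{n^{2}}{x^{2}},
\]
with $g'(u) = (1+\varepsilon)(2+\varepsilon)u^{\varepsilon}(1-u)$. All further manipulations become elementary one-variable statements on $(0,1)$.

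From here, parts (i) and (ii) are essentially automatic. The bound $\varphi(x)<x$ follows from $u<1$; the identity $g(1)=1$ together with $g'>0$ on $(0,1)$ shows simultaneously that $\varphi'(x) = g(u) < 1$ and $\varphi''(x) > 0$. For (ii), the computation $g(u)/u^{2+\varepsilon} = (2+\varepsilon)/u - (1+\varepsilon)$ gives $1 - x\varphi'/\varphi = (2+\varepsilon)(u-1)/u$, and the substitutions $u-1 = -n^{2}/x$, $u = (x-n^{2})/x$ produce exactly $-(2+\varepsilon)n^{2}/(x-n^{2})$.

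The main obstacle is (iii). One checks $\varphi'(x) + 2x\varphi''(x) = g(u) + 2(1-u)g'(u) =: \tilde h(u)$, which expands to
\[
\tilde h(u) = u^{\varepsilon}\bigl[(2+\varepsilon)u - (1+\varepsilon)u^{2} + 2(1+\varepsilon)(2+\varepsilon)(1-u)^{2}\bigr],
\]
and differentiation gives $\tilde h'(u) = (1+\varepsilon)(2+\varepsilon)u^{\varepsilon-1}(1-u)\bigl[2\varepsilon - (2\varepsilon+3)u\bigr]$. Hence $\tilde h$ has a unique interior maximum on $(0,1)$ at $u^{*} = 2\varepsilon/(2\varepsilon+3)$, while the boundary values are $\tilde h(0^{+})=0$ and $\tilde h(1)=1$. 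A short simplification yields $\tilde h(u^{*}) = 2(u^{*})^{\varepsilon}(14\varepsilon^{2}+33\varepsilon+18)/(2\varepsilon+3)^{2}$, so the task reduces to the scalar inequality $\tilde h(u^{*}) < 4$ for every $\varepsilon > 0$. The plan is to use $(u^{*})^{\varepsilon} = (1 - 3/(2\varepsilon+3))^{\varepsilon} \le e^{-3\varepsilon/(2\varepsilon+3)}$ and compare the two sides of the resulting inequality as functions of $\varepsilon$; they agree at $\varepsilon=0$ with common value $18$ (corresponding to $\tilde h(u^{*}) = 4$ at $\varepsilon=0$), and a short derivative comparison then yields strict inequality for all $\varepsilon > 0$. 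The equality $\tilde h(u^{*}) = 4$ at $\varepsilon = 0$, which is attained in the limit $x\downarrow n^{2}$, is precisely why the hypothesis $\varepsilon > 0$ is essential in (iii).
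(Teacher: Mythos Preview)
Your proposal is correct and follows essentially the same route as the paper. The substitution $u = 1 - n^{2}/x$ is just the complement of the paper's $y = n^{2}/x$; your $\tilde h(u)$ is exactly the paper's $\psi(y)$ with $y = 1-u$, and your critical point $u^{*} = 2\varepsilon/(2\varepsilon+3)$ corresponds to the paper's $y = 3/(2\varepsilon+3)$, yielding the identical maximum value $\tfrac{2(7\varepsilon+6)}{2\varepsilon+3}\bigl(\tfrac{2\varepsilon}{2\varepsilon+3}\bigr)^{\varepsilon}$ (your numerator $14\varepsilon^{2}+33\varepsilon+18$ factors as $(2\varepsilon+3)(7\varepsilon+6)$).

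The only genuine difference is the endgame for (iii). The paper sets $\theta(\varepsilon) = \log(\text{max})$, computes $\theta''(\varepsilon) > 0$, and uses convexity together with $\theta'(+\infty) = 0$ to conclude $\theta' < 0$, hence $\theta(\varepsilon) < \theta(0^{+}) = \log 4$. Your route inserts the bound $(u^{*})^{\varepsilon} \le e^{-3\varepsilon/(2\varepsilon+3)}$ first; the resulting inequality $14\varepsilon^{2}+33\varepsilon+18 < 2(2\varepsilon+3)^{2}e^{3\varepsilon/(2\varepsilon+3)}$ does indeed follow from a single derivative computation (taking logs, the derivative of the difference simplifies to $-9(5\varepsilon+3)/[(2\varepsilon+3)^{2}(7\varepsilon+6)] < 0$). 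Both arguments are short; yours trades exactness for avoiding the second derivative. You should spell out this last computation rather than leaving it as ``a short derivative comparison,'' since that is where the content of (iii) actually lies.
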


\begin{proof}
  By the definition of $\varphi$, we get $\varphi (x) < x$.

  Differentiating $\varphi$, we get
  \[ \varphi' (x) = \frac{x + n^2 (1 + \varepsilon)}{x} \left( 1 -
     \frac{n^2}{x} \right)^{1 + \varepsilon}, \]
  \[ \varphi'' (x) = \frac{n^4 (1 + \varepsilon) (2 + \varepsilon)}{x^3}
     \left( 1 - \frac{n^2}{x} \right)^{\varepsilon} . \]
  Since $\varphi'' (x) > 0$, we have $\varphi' (x) < \varphi' (+ \infty) = 1$.

  Now we can check
  \[ 1 - \frac{x \varphi' (x)}{\varphi (x)} = - \frac{(2 + \varepsilon)
     n^2}{x - n^2} . \]

  By a direct computation, we have
  \[ \varphi' (x) + 2 x \varphi'' (x) = \frac{n^4 (1 + \varepsilon) (3 + 2
     \varepsilon) + n^2 \varepsilon x + x^2}{x^2} \left( 1 - \frac{n^2}{x}
     \right)^{\varepsilon} . \]
  Replacing $x$ by $\frac{n^2}{y}$, we set
  \[ \psi (y) = [(1 + \varepsilon) (3 + 2 \varepsilon) y^2 + \varepsilon y +
     1] (1 - y)^{\varepsilon} \]
  for $0 < y < 1$. Since
  \[ \psi' (y) = (1 + \varepsilon) (2 + \varepsilon) y (1 - y)^{\varepsilon - 1}
     [3 - (3 + 2 \varepsilon) y], \]
  we have
  \[ \sup_{0 < y < 1} \psi (y) = \psi \left( \frac{3}{3 + 2 \varepsilon}
     \right) = \frac{2 (7 \varepsilon + 6) }{2 \varepsilon + 3} \left( \frac{2
     \varepsilon}{2 \varepsilon + 3} \right)^{\varepsilon} . \]
  Taking the logarithm, we set
  \[ \theta (\varepsilon) = \log \frac{2 (7 \varepsilon + 6) }{2 \varepsilon +
     3} + \varepsilon \log \frac{2 \varepsilon}{2 \varepsilon + 3} . \]
  Then we have
  \[ \theta' (\varepsilon) = \frac{3 (7 \varepsilon + 9)}{(2 \varepsilon + 3)
     (7 \varepsilon + 6)} + \log \frac{2 \varepsilon}{2 \varepsilon + 3}, \]
  \[ \theta'' (\varepsilon) = \frac{27 (7 \varepsilon^2 + 17 \varepsilon +
     12)}{\varepsilon (2 \varepsilon + 3)^2 (7 \varepsilon + 6)^2} . \]
  Since $\theta'' (\varepsilon) > 0$, we have $\theta' (\varepsilon) < \theta'
  (+ \infty) = 0$. This implies $\theta (\varepsilon) < \theta (0 +) = \log
  4$. Thus we obtain $\psi (y) < 4$ for $0 < y < 1$.
\end{proof}

Now we give the proof of Theorem \ref{thm-hyperbolic}.

\begin{proof*}{Proof of Theorem \ref{thm-hyperbolic}}
We study the function
\[ f = \frac{\normho^2}{\varphi (| H |^2)} . \]
The pinching condition implies that $f \leqslant k$.

First, we derive the evolution equation of $f$. By a direct computation, we
have
\[ \dt f = \frac{1}{\varphi (| H |^2)} \left( \dt \normho^2 - f \dt \varphi (|
   H |^2) \right) \]
and
\[ \Delta f = \frac{1}{\varphi (| H |^2)} \left( \Delta \normho^2 - f \Delta
   \varphi (| H |^2) - 2 \langle \nabla f, \nabla \varphi (| H |^2) \rangle
   \right) . \]
From Lemma \ref{evo} (ii), we have
\begin{eqnarray*}
  &&\left( \dt - \Delta \right) \varphi (| H |^2) \\ & = & - 2 \varphi' (| H |^2)
  (| \nabla H |^2 - R_2 + n | H |^2) - \varphi'' (| H |^2) \big| \nabla | H |^2
  \big|^2\\
  & \geqslant & - 2 (\varphi' (| H |^2) + 2 | H |^2 \varphi'' (| H |^2)) |
  \nabla H |^2\\
  &  & + 2 \varphi' (| H |^2)  (R_2 - n | H |^2) .
\end{eqnarray*}
Therefore we obtain
\begin{eqnarray}
  &&\left( \dt - \Delta \right) f \nonumber\\& \leqslant & \frac{2}{\varphi (| H |^2)}
  \langle \nabla f, \nabla \varphi (| H |^2) \rangle  \label{dtfinH}\\
  &  & - \frac{2}{\varphi (| H |^2)} [| \nabla \mathring{h} |^2 - f (\varphi' (| H
  |^2) + 2 | H |^2 \varphi'' (| H |^2)) | \nabla H |^2] \nonumber\\
  &  & + \frac{2}{\varphi (| H |^2)} \left( R_1 - \frac{1}{n} R_2 \right) + 2
  f \left[ n - \frac{\varphi' (| H |^2)}{\varphi (| H |^2)} (R_2 - n | H |^2)
  \right] . \nonumber
\end{eqnarray}

From {\eqref{dh2}} and Lemma \ref{phy} (iii), we have
\begin{eqnarray*}
  &  & f (\varphi' (| H |^2) + 2 | H |^2 \varphi'' (| H |^2)) | \nabla H
  |^2\\
  &  & \leqslant 4 k | \nabla H |^2 \leqslant \frac{2 (n - 1)}{n (n + 2)} |
  \nabla H |^2 \leqslant | \nabla \mathring{h} |^2 .
\end{eqnarray*}
From {\eqref{R1nogreater}}, {\eqref{R2eq}}, we get
\begin{eqnarray*}
  &  & \frac{2}{\varphi (| H |^2)} \left( R_1 - \frac{1}{n} R_2 \right) + 2 f
  \left[ n - \frac{\varphi' (| H |^2)}{\varphi (| H |^2)} (R_2 - n | H |^2)
  \right]\\
  & \leqslant & \frac{2}{\varphi (| H |^2)} \left[ | h |^2 \normho^2 + 2 P
  \normho^2 - \frac{1}{n} P | H |^2 \right] \\ &&+ 2 f \left[ n - \frac{\varphi' (|
  H |^2) | H |^2}{\varphi (| H |^2)} (| h |^2 - P - n) \right]\\
  & = & 2 f \left[ | h |^2 + n - \frac{\varphi' (| H |^2) | H |^2}{\varphi (|
  H |^2)} (| h |^2 - n) \right] \\ &&+ \frac{2 P}{\varphi (| H |^2)} \left[ 2
  \normho^2 - \frac{1}{n} | H |^2 + f \varphi' (| H |^2) | H |^2 \right] .
\end{eqnarray*}
If $p = 1$, then $P$ is identically zero. If $p \geqslant 2$, it follows from
the pinching condition and Lemma \ref{phy} (i) that
\begin{eqnarray*}
  &  & 2 \normho^2 - \frac{1}{n} | H |^2 + f \varphi' (| H |^2) | H |^2\\
  & \leqslant & 2 k \varphi (| H |^2) - \frac{1}{n} | H |^2 + f | H |^2\\
  & \leqslant & 2 k | H |^2 - \frac{1}{n} | H |^2 + k | H |^2\\
  & = & \left( 3 k - \frac{1}{n} \right) | H |^2\\
  & \leqslant & 0.
\end{eqnarray*}
By Lemma \ref{phy} (ii), we have
\begin{eqnarray*}
  &  & | h |^2 + n - \frac{\varphi' (| H |^2) | H |^2}{\varphi (| H |^2)} (|
  h |^2 - n)\\
  & = & \left( 1 - \frac{\varphi' (| H |^2) | H |^2}{\varphi (| H |^2)}
  \right) (| h |^2 - n) + 2 n\\
  & = & - \frac{(2 + \varepsilon) n}{| H |^2 - n^2} (n | h |^2 - n^2) + 2 n\\
  & \leqslant & - (2 + \varepsilon) n + 2 n\\
  & = & - \varepsilon n.
\end{eqnarray*}
Hence, we obtain
\[ \left( \dt - \Delta \right) f \leqslant \frac{2}{\varphi (| H |^2)} \langle
   \nabla f, \nabla \varphi (| H |^2) \rangle - 2 \varepsilon n f. \]
This implies $f \equiv 0$, i.e., $M_t$ is a totally umbilical sphere.
\end{proof*}

\section{Integral pinched ancient solutions}

\subsection{Ancient solutions in the Euclidean space}\

We need the following Sobolev type inequality on submanifolds \cite{MS}.

\begin{proposition}
  \label{Sobo}Let $M$ be an $n$-dimensional closed submanifold in the
  Euclidean space. For any nonnegative $C^1$-function $f$ on $M$, we have
  \[ \left( \int_M f^{\frac{n}{n - 1}} \mathd \mu \right)^{\frac{n - 1}{n}}
     \leqslant B (n) \int_M (| \nabla f | + f | H |) \mathd \mu, \]
  where $B (n)$ is a positive constant depending only on $n$.
\end{proposition}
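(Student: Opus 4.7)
The plan is to prove the Michael--Simon Sobolev inequality by reducing it, via a coarea/layer-cake argument, to a relative isoperimetric-type inequality for subsets of $M$, and then establishing this isoperimetric inequality via a density estimate that comes from a weak monotonicity formula for submanifolds in Euclidean space. The mean curvature term on the right-hand side will track the deviation of the monotonicity from the minimal-submanifold case, and the constant $B(n)$ will emerge from a Vitali covering applied to a family of Euclidean balls centered at points of $M$.

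For the reduction step, I would first prove the equivalent isoperimetric statement that every smooth open $A \subset M$ satisfies
\[
  \mu(A)^{(n-1)/n} \le B'(n)\Bigl(\mathcal{H}^{n-1}(\partial A) + \int_A |H|\,\mathd\mu\Bigr).
\]
Given this, the layer-cake identity $\int_M f^{n/(n-1)}\,\mathd\mu = \tfrac{n}{n-1}\int_0^\infty t^{1/(n-1)}\mu(\{f>t\})\,\mathd t$, combined with the coarea formulas $\int_M|\nabla f|\,\mathd\mu = \int_0^\infty \mathcal{H}^{n-1}(\{f=t\})\,\mathd t$ and $\int_M f|H|\,\mathd\mu = \int_0^\infty \int_{\{f>t\}} |H|\,\mathd\mu\,\mathd t$, yields the Sobolev inequality by a standard Federer--Fleming-type integration in $t$ (one applies the isoperimetric estimate to each superlevel set and uses a one-variable Hardy-type inequality).

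The heart of the argument is a density bound. For $x \in \mathbb{R}^{n+p}$ set $\theta_x(r) = r^{-n}\mu(M \cap B_r(x))$. Applying the first variation formula on $M$ to the radial vector field $\eta(|y-x|/r)(y-x)$ and using $\operatorname{div}_M V = \sum_i \langle \overline{\nabla}_{e_i} V, e_i \rangle - \langle V, H \rangle$ yields, after letting $\eta$ approach $\chi_{[0,1]}$, a weak monotonicity of the form
\[
  \theta_x(R) \ge \theta_x(r) - \int_r^R s^{-n}\!\int_{M \cap B_s(x)} |H|\,\mathd\mu\,\mathd s.
\]
Sending $r \to 0^+$ and invoking $\theta_x(0^+) \ge \omega_n$ for $x \in M$ gives a pointwise lower density bound in terms of $\theta_x(R)$ and a weighted mean-curvature integral. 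For each $x \in A$, I would then choose $R(x)$ so that $\theta_x(R(x)) = \omega_n/2$; extracting a disjoint subfamily from $\{B_{R(x)}(x) : x \in A\}$ via Vitali's covering lemma and summing the resulting density bounds produces the isoperimetric inequality.

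The main technical obstacle is the rigorous derivation of the weak monotonicity formula and the careful bookkeeping needed to ensure that the final constant $B(n)$ depends only on the intrinsic dimension $n$, not on the codimension $p$. Working with Euclidean ambient balls rather than intrinsic geodesic balls is essential here, since the first variation argument exploits the flat background geometry and the codimension enters only through $|H|$, which already appears on the right. A secondary issue is the justification of $\theta_x(0^+) \ge \omega_n$ at points $x \in M$, which follows from the local $C^1$ structure of $M$ together with approximation of characteristic functions by smooth cutoffs inside the first variation identity.
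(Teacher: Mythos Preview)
The paper does not prove this proposition at all: it is quoted as a known result from Michael--Simon \cite{MS} and used as a black box. So there is no ``paper's own proof'' to compare against; your proposal is supplying an argument where the authors simply cite the literature.

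That said, your outline is broadly in the spirit of the standard proofs of the Michael--Simon inequality: one establishes a weak monotonicity/density estimate for $r^{-n}\mu(M\cap B_r(x))$ via the first variation formula (the mean curvature entering as the defect term), turns this into an isoperimetric inequality for subsets of $M$ by a Vitali covering, and then passes to the Sobolev form by coarea/layer-cake. A couple of points deserve care if you actually carry this out. First, the precise form of the monotonicity identity involves the normal component of the position vector, and the inequality you wrote for $\theta_x$ is a consequence rather than the raw identity; you should make sure the sign and the exact error term are correct. Second, the step ``apply the isoperimetric estimate to each superlevel set and use a one-variable Hardy-type inequality'' hides the main analytic work in the Federer--Fleming reduction: from $\mu(\{f>t\})^{(n-1)/n}\le B'(n)\bigl(\mathcal{H}^{n-1}(\{f=t\})+\int_{\{f>t\}}|H|\bigr)$ one does not immediately get the Sobolev inequality by integrating in $t$, and the usual route instead approximates $f$ by simple functions or argues via a maximal-function/stopping-time selection. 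None of this is a genuine obstruction, but it is where the details live; since the paper only needs the statement, citing \cite{MS} as the authors do is entirely appropriate.
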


Now we investigate the integral pinched ancient solution in dimension two.

\begin{theorem}
  Let $F : M^2 \times (- \infty, 0) \rightarrow \mathbb{R}^{2 + p}$ be a
  compact ancient solution of mean curvature flow. Suppose for all $t < 0$,
  there holds
  \[ \int_{M_t} | \mathring{h} |^2 \mathd \mu_t < C, \]
  where $C$ is an explicit positive constant. Then $M_t$ is a shrinking
  sphere.
\end{theorem}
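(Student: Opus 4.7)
The plan is to show, under a suitable choice of the positive constant $C(2)$, that the uniform bound
\[
Q(t) := \int_{M_t} |\mathring{h}|^2 \mathd \mu_t < C(2)
\]
for all $t<0$ forces $Q \equiv 0$. Since a compact totally umbilical submanifold of $\mathbb{R}^{2+p}$ must be a round sphere, this will show that the ancient solution is the standard family of shrinking spheres. The first step is to derive an evolution inequality for $Q(t)$. Using $\frac{\partial}{\partial t}\, \mathd \mu = -|H|^2 \mathd \mu$, Lemma \ref{evo}(iii) with $c=0, n=2$, the algebraic bound \eqref{R12R2/n} and the identity \eqref{R2eq}, one can show that the reaction term $2R_1 - R_2 - |H|^2|\mathring{h}|^2$ is bounded above by $3|\mathring{h}|^4$, yielding after integration by parts
\[
\frac{d}{dt}Q \leqslant -2D + 3\int_{M_t} |\mathring{h}|^4 \mathd \mu_t, \qquad D(t) := \int_{M_t} |\nabla \mathring{h}|^2 \mathd \mu_t.
\]

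Next I would apply the Michael--Simon Sobolev inequality (Proposition \ref{Sobo}) with $n=2$ to $f = |\mathring{h}|^2$, together with the Kato inequality $\bigl|\nabla|\mathring{h}|^2\bigr| \leqslant 2|\mathring{h}||\nabla \mathring{h}|$ and Cauchy--Schwarz, squaring to obtain
\[
\int_{M_t} |\mathring{h}|^4 \mathd \mu_t \leqslant 8 B(2)^2 Q\, D + 2 B(2)^2 \Big(\int_{M_t} |H|^2 \mathd \mu_t\Big) \int_{M_t}|\mathring{h}|^4 \mathd \mu_t.
\]
The crucial observation in dimension $n=2$ is that the Gauss equation on a surface in $\mathbb{R}^{2+p}$ gives $|H|^2 - |h|^2 = 2K$, so by the Gauss--Bonnet theorem
\[
\int_{M_t} |H|^2 \mathd \mu_t = 2 Q(t) + 8\pi \chi(M) \leqslant 2 Q(t) + 16\pi.
\]
Choosing $C(2)$ small enough, the last term in the Sobolev inequality can be absorbed, giving $\int|\mathring{h}|^4 \mathd \mu \leqslant K\, Q\, D$ for an explicit constant $K$; a further smallness restriction on $C(2)$ then produces the dissipation
\[
\frac{d}{dt}Q(t) \leqslant -\alpha D(t), \qquad \alpha := 2 - 3 K C(2) > 0.
\]

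To upgrade this dissipation into actual decay of $Q$ itself, I would apply Proposition \ref{Sobo} a second time, to $f = |\mathring{h}|$, which by the same method yields a reverse-type inequality $Q(t) \leqslant K'\, V(t)\, D(t)$, with $V(t) = |M_t|$ denoting the area. Combined with the previous step this gives a logarithmic-type differential inequality
\[
\frac{d}{dt}Q(t) \leqslant -\frac{\delta}{V(t)} Q(t).
\]
The Gauss--Bonnet bound on $\int |H|^2 \mathd \mu$ gives $-dV/dt \leqslant \Lambda$, so $V(t) \leqslant V(0) + \Lambda |t|$ for $t < 0$ and $\int_{-\infty}^0 V(t)^{-1}\, \mathd t = +\infty$. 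Integrating the differential inequality backwards from any $t$ to $s \to -\infty$ then forces $Q(s) \to +\infty$ unless $Q(t) = 0$, contradicting the uniform hypothesis. Therefore $Q \equiv 0$, so $\mathring{h} \equiv 0$, and $M_t$ is a round sphere at every time.

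The main technical obstacle is the compatibility of constants: the explicit value of $C(2)$ in the statement must be tuned so that both the Sobolev absorption step (requiring $2 B(2)^2 \int |H|^2 \mathd \mu < 1$, which is controlled via Gauss--Bonnet by the Euler characteristic and $Q < C(2)$) and the positivity $\alpha > 0$ of the dissipation coefficient are simultaneously achieved, and one must also verify the backward propagation argument carefully to ensure the at-most-linear growth of $V(t)$ suffices.
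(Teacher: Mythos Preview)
Your Sobolev absorption step fails, and this is a genuine gap. You want to absorb the term $2B(2)^2\bigl(\int_{M_t}|H|^2\,\mathd\mu_t\bigr)\int_{M_t}|\mathring{h}|^4\,\mathd\mu_t$ into the left-hand side, which requires $2B(2)^2\int_{M_t}|H|^2\,\mathd\mu_t<1$. But applying Proposition~\ref{Sobo} to $f\equiv 1$ and Cauchy--Schwarz gives $\int_{M_t}|H|^2\,\mathd\mu_t\geqslant B(2)^{-2}$ for \emph{any} closed surface in Euclidean space, so $2B(2)^2\int|H|^2\geqslant 2$ no matter how small you take $C$. Your own Gauss--Bonnet computation confirms this: $\int|H|^2=2Q+8\pi\chi$, and the $8\pi\chi$ contribution does not shrink with $C$. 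The same obstruction kills your second Sobolev application to $f=|\mathring{h}|$.

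The paper avoids this by two coupled changes. First, it does not discard the $|H|^2|\mathring{h}|^2$ interaction: using the Gauss--Bonnet identity to rewrite $\frac{\mathd}{\mathd t}\int|\mathring{h}|^2=\frac{\mathd}{\mathd t}\int\bigl(\tfrac{3}{2}|\mathring{h}|^2-\tfrac{1}{4}|H|^2\bigr)$ produces, after the evolution equations and \eqref{3R12R2}, the inequality
\[
\frac{\mathd}{\mathd t}\int_{M_t}|\mathring{h}|^2\,\mathd\mu_t\leqslant\int_{M_t}\Bigl(9|\mathring{h}|^4-|\nabla\mathring{h}|^2-\tfrac{1}{2}|H|^2|\mathring{h}|^2\Bigr)\mathd\mu_t,
\]
retaining a negative $|H|^2|\mathring{h}|^2$ term. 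Second, in the Sobolev step the paper factors out $|\mathring{h}|$ \emph{before} Cauchy--Schwarz, giving $\bigl(\int|\mathring{h}|^4\bigr)^{1/2}\leqslant B(2)\,Q^{1/2}\bigl[\int(6|\nabla\mathring{h}|^2+3|\mathring{h}|^2|H|^2)\bigr]^{1/2}$. The small prefactor is now $Q<C$, not $\int|H|^2$, and the resulting $\int|\mathring{h}|^2|H|^2$ is absorbed by the negative term just retained. This yields $\frac{\mathd}{\mathd t}Q\leqslant -\int|\mathring{h}|^4\leqslant -Q^2/V$, and the paper finishes with the reverse H\"older step and the linear volume bound (which you correctly identified) rather than a second Sobolev application.
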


\begin{proof}
  Let $\chi (M)$ be the Euler characteristic of $M$. The Gauss-Bonnet formula
  implies
  \begin{equation}
    \int_{M_t} \left( \frac{1}{4} | H |^2 - \frac{1}{2} | \mathring{h} |^2 \right)
    \mathd \mu_t = 2 \pi \chi (M) . \label{GBform}
  \end{equation}
  Noting that $\dt \mathd \mu_t = - | H |^2 \mathd \mu_t$, we have
  \begin{eqnarray*}
    \frac{\mathd}{\mathd t} \int_{M_t} | \mathring{h} |^2 \mathd \mu_t & = &
    \frac{\mathd}{\mathd t} \int_{M_t} \left( \frac{3}{2} | \mathring{h} |^2 -
    \frac{1}{4} | H |^2 \right) \mathd \mu_t\\
    & = & \int_{M_t} \left[ \frac{\partial}{\partial t} \left( \frac{3}{2} |
    \mathring{h} |^2 - \frac{1}{4} | H |^2 \right) - | H |^2 \left( \frac{3}{2} |
    \mathring{h} |^2 - \frac{1}{4} | H |^2 \right) \right] \mathd \mu_t\\
    & = & \int_{M_t} \left[ - 3 | \nabla \mathring{h} |^2 + \frac{1}{2} | \nabla H
    |^2 + 3 R_1 - 2 R_2 \right.\\
    &  & \left. - | H |^2 \left( \frac{3}{2} | \mathring{h} |^2 - \frac{1}{4} | H
    |^2 \right) \right] \mathd \mu_t .
  \end{eqnarray*}
  By {\eqref{dh2}}, we have
  \begin{equation}
    - 3 | \nabla \mathring{h} |^2 + \frac{1}{2} | \nabla H |^2 \leqslant - | \nabla
    \mathring{h} |^2 . \label{3delho2}
  \end{equation}
  By {\eqref{R1nogreater}} and {\eqref{R2eq}}, we have
  \begin{eqnarray}
    3 R_1 - 2 R_2 & \leqslant & 3 | h |^4 + 3 (2 | \mathring{h} |^2 - | H |^2) P -
    2 | H |^2 (| h |^2 - P) \nonumber\\
    & \leqslant & 3 | h |^4 + 6 | \mathring{h} |^4 - 2 | H |^2 | h |^2
    \label{3R12R2}\\
    & = & 9 | \mathring{h} |^4 + | H |^2 | \mathring{h} |^2 - \frac{1}{4} | H |^4 .
    \nonumber
  \end{eqnarray}
  So we get
  \[ \frac{\mathd}{\mathd t} \int_{M_t} | \mathring{h} |^2 \mathd \mu_t \leqslant
     \int_{M_t} \left( 9 | \mathring{h} |^4 - | \nabla \mathring{h} |^2 - \frac{1}{2} |
     H |^2 | \mathring{h} |^2 \right) \mathd \mu_t . \]
  From Proposition \ref{Sobo}, we have
  \begin{eqnarray*}
    \left( \int_{M_t} | \mathring{h} |^4 \mathd \mu_t \right)^{\frac{1}{2}} &
    \leqslant & B (2) \int_{M_t} (2 | \mathring{h} | | \nabla \mathring{h} | + | \mathring{h}
    |^2 | H |) \mathd \mu_t\\
    & \leqslant & B (2) \left( \int_{M_t} | \mathring{h} |^2 \mathd \mu_t
    \right)^{\frac{1}{2}} \left[ \int_{M_t} (2 | \nabla \mathring{h} | + | \mathring{h}
    |^{} | H |)^2 \mathd \mu_t \right]^{\frac{1}{2}}\\
    & \leqslant & B (2) \sqrt{C} \left[ \int_{M_t} (6 | \nabla \mathring{h} |^2 +
    3 | \mathring{h} |^2 | H |^2) \mathd \mu_t \right]^{\frac{1}{2}} .
  \end{eqnarray*}
  Setting $C = \frac{1}{60 B (2)^2}$, we get
  \[  \int_{M_t} | \mathring{h} |^4 \mathd \mu_t  \leqslant
     \frac{1}{10} \int_{M_t} \left( | \nabla \mathring{h} |^2 + \frac{1}{2} |
     \mathring{h} |^2 | H |^2 \right) \mathd \mu_t . \]
  Hence we obtain
  \begin{equation}
    \frac{\mathd}{\mathd t} \int_{M_t} | \mathring{h} |^2 \mathd \mu_t \leqslant -
    \int_{M_t} | \mathring{h} |^4 \mathd \mu_t \leqslant - \frac{\left( \int_{M_t} |
    \mathring{h} |^2 \mathd \mu_t \right)^2}{\int_{M_t} 1 \mathd \mu_t} .
    \label{dtho2norm}
  \end{equation}

  Let $I (t) = \int_{M_t} | \mathring{h} |^2 \mathd \mu_t$, $\mathit{vol} (t) =
  \int_{M_t} 1 \mathd \mu_t$. We prove $I (t)$ is always zero by contradiction.
  Assume there exists $t_0$, such that $I (t_0) > 0$. Thus, for all $t < t_0$,
  we get from {\eqref{dtho2norm}} that
  \[ \frac{\mathd}{\mathd t} I^{- 1} (t) \geqslant \mathit{vol}^{- 1} (t) . \]
  Integrating, we get
  \begin{equation}
    I^{- 1} (t_0) - I^{- 1} (t) \geqslant \int_t^{t_0} \mathit{vol}^{- 1} (\tau)
    \mathd \tau . \label{ho2norm}
  \end{equation}
  Then we estimate the volume of $M_t$. By {\eqref{GBform}}, we have
  \[ \frac{\mathd}{\mathd t} \mathit{vol} (t) = - \int_{M_t} | H |^2 \mathd
     \mu_t \geqslant - \bar{C} , \]
  where $\bar{C} = 2 C + 16 \pi$. This yields
  \[ \mathit{vol} (t) \leqslant \mathit{vol} (t_0) + \bar{C} (t_0 -
     t) . \]
  Thus, we get from {\eqref{ho2norm}} that
  \[ I^{- 1} (t_0) \geqslant I^{- 1} (t) + \frac{1}{\bar{C}} \log
     \left[ 1 + \frac{\bar{C}}{\mathit{vol} (t_0)} (t_0 - t) \right] .
  \]
  The right hand side of the above inequality tends to $+ \infty$ as $t
  \rightarrow - \infty$, which leads to a contradiction. Therefore, we obtain $I
  (t) = 0$ for all $t$.
\end{proof}

Denote by $f_+$ the positive part of a function $f$. Now we derive the
following integral inequality.

\begin{lemma}
  \label{intULapU}Suppose $M$ is an $n (\geqslant 3)$-dimensional closed
  submanifold in the Euclidean space, and $U$ is a $C^2$-function on $M$. For
  any $q > 1$, we have
  \[ \int_M U_+^{q - 1} \Delta U \mathd \mu \leqslant - \frac{q - 1}{4 q^2}
     \left[ \frac{1}{2 B (n)^2} \left( \int_M U_+^{\frac{q n}{n - 2}} \mathd
     \mu \right)^{\frac{n - 2}{n}} - \int_M | H |^2 U_+^q \mathd \mu \right] .
  \]
\end{lemma}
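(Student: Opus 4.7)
The plan is to reduce the left-hand side to a pure gradient integral of the auxiliary function $v = U_+^{q/2}$ by integration by parts, and then invoke the Sobolev inequality of Proposition \ref{Sobo} applied to a suitable power of $v$ to bring in the $L^{qn/(n-2)}$-norm. As a first step I would establish the identity
\begin{equation*}
\int_M U_+^{q-1} \Delta U \, \mathd \mu = -\frac{4(q-1)}{q^2} \int_M |\nabla v|^2 \, \mathd \mu,
\end{equation*}
which formally follows from integrating by parts on the open set $\{U > 0\}$ together with the a.e.\ relation $\nabla v = \tfrac{q}{2} U_+^{q/2-1}\nabla U$. When $1 < q < 2$ so that $U_+^{q-1}$ fails to be $C^1$ across $\{U = 0\}$, I would justify this by replacing the map $t \mapsto t_+^{q-1}$ with a smooth monotone mollification, applying the classical integration by parts on $M$, and passing to the limit by dominated convergence.

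For the main estimate I would apply Proposition \ref{Sobo} to $f = v^\beta$ with the exponent $\beta = \tfrac{2(n-1)}{n-2}$, chosen so that simultaneously $f^{n/(n-1)} = v^{2n/(n-2)}$ and $2(\beta-1) = \tfrac{2n}{n-2}$. This gives
\begin{equation*}
\Bigl(\int_M v^{\frac{2n}{n-2}} \mathd \mu\Bigr)^{\frac{n-1}{n}} \leq B(n) \int_M \bigl(\beta v^{\beta-1}|\nabla v| + v^\beta |H|\bigr) \mathd \mu.
\end{equation*}
To each of the two terms on the right I would apply Cauchy-Schwarz, both producing a common factor $\bigl(\int_M v^{2n/(n-2)} \mathd \mu\bigr)^{1/2}$. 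Dividing through by that factor, squaring, and using $(a+b)^2 \leq 2(a^2+b^2)$ yields
\begin{equation*}
\frac{1}{2B(n)^2} \Bigl(\int_M v^{\frac{2n}{n-2}} \mathd \mu\Bigr)^{\frac{n-2}{n}} \leq \beta^2 \int_M |\nabla v|^2 \mathd \mu + \int_M v^2 |H|^2 \mathd \mu.
\end{equation*}

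To obtain the constant $16$ in the lemma, I would invoke the elementary fact that $\beta = 2 + \tfrac{2}{n-2} \leq 4$ for all $n \geq 3$, hence $\beta^2 \leq 16$, which permits replacing $\beta^2$ by $16$ uniformly. Solving the inequality for $\int_M |\nabla v|^2 \mathd \mu$, multiplying by the negative factor $-\tfrac{4(q-1)}{q^2}$ (which reverses the inequality), and substituting back $v^2 = U_+^q$ and $v^{2n/(n-2)} = U_+^{qn/(n-2)}$ produces exactly the stated inequality. The only genuine technical point is the justification of the integration by parts near the free boundary $\{U = 0\}$ for $q$ close to $1$; beyond that, the argument is bookkeeping with the exponents coming out of the Michael-Simon-style Sobolev embedding, and no new ideas are required.
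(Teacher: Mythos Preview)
Your proposal is correct and follows essentially the same route as the paper: apply Proposition~\ref{Sobo} to $U_+^{q(n-1)/(n-2)}$, use Cauchy--Schwarz together with $(a+b)^2\le 2(a^2+b^2)$ and the bound $\tfrac{n-1}{n-2}\le 2$, and combine with the integration-by-parts identity for $\int U_+^{q-1}\Delta U$. The only cosmetic difference is the regularization of $U_+$: the paper works with $U_\varepsilon=\sqrt{U_+^2+\varepsilon}$ throughout and lets $\varepsilon\to 0$ at the end, whereas you mollify the map $t\mapsto t_+^{q-1}$ directly---both devices serve the same purpose of justifying the integration by parts across $\{U=0\}$ when $1<q<2$.
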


\begin{proof}
  Set $U_{\varepsilon} = \sqrt{U_+^2 + \varepsilon}$ for $\varepsilon > 0$.
  Then $U_{\varepsilon}$ is $C^1$-differentiable, and $\nabla U_{\varepsilon}
  = \frac{U_+}{U_{\varepsilon}} \nabla U$. By the divergence theorem, we have
  \begin{eqnarray}
    \int_M U_{\varepsilon}^{q - 1} \Delta U \mathd \mu &=& - (q - 1) \int_M
    U_{\varepsilon}^{q - 3} U_+ | \nabla U |^2 \mathd \mu \nonumber\\ &\leqslant& - (q - 1)
    \int_M U_{\varepsilon}^{q - 2} | \nabla U_{\varepsilon} |^2 \mathd \mu .
    \label{intUeq}
  \end{eqnarray}
  Applying Proposition \ref{Sobo} to $U_{\varepsilon}^{q (n - 1) / (n - 2)}$,
  we have
  \begin{eqnarray*}
    \left( \int_M U_{\varepsilon}^{\frac{q n}{n - 2}} \mathd \mu
    \right)^{\frac{n - 1}{n}} & \leqslant & B (n) \int_M \left( \left| \nabla
    U_{\varepsilon}^{\frac{q (n - 1)}{n - 2}} \right| +
    U_{\varepsilon}^{\frac{q (n - 1)}{n - 2}} | H | \right) \mathd \mu\\
    & \leqslant & B (n) \int_M U_{\varepsilon}^{\frac{q n}{2 (n - 2)}} \left(
    2 q U_{\varepsilon}^{\frac{q}{2} - 1} | \nabla U_{\varepsilon} | +
    U_{\varepsilon}^{\frac{q}{2}} | H | \right) \mathd \mu\\
    & \leqslant & B (n) \left( \int_M U_{\varepsilon}^{\frac{q n}{n - 2}}
    \right)^{\frac{1}{2}} \left[ \int_M \left( 2 q
    U_{\varepsilon}^{\frac{q}{2} - 1} | \nabla U_{\varepsilon} | +
    U_{\varepsilon}^{\frac{q}{2}} | H | \right)^2 \mathd \mu
    \right]^{\frac{1}{2}} .
  \end{eqnarray*}
  This implies
  \begin{equation}
    \left( \int_M U_{\varepsilon}^{\frac{q n}{n - 2}} \mathd \mu
    \right)^{\frac{n - 2}{n}} \leqslant B (n)^2 \int_M 2 (4 q^2
    U_{\varepsilon}^{q - 2} | \nabla U_{\varepsilon} |^2 + U_{\varepsilon}^q |
    H |^2) \mathd \mu . \label{intUeqn}
  \end{equation}
  Combining {\eqref{intUeq}} and {\eqref{intUeqn}}, we get
  \[ \int_M U_{\varepsilon}^{q - 1} \Delta U \mathd \mu \leqslant - \frac{q -
     1}{4 q^2} \left[ \frac{1}{2 B (n)^2} \left( \int_M
     U_{\varepsilon}^{\frac{q n}{n - 2}} \mathd \mu \right)^{\frac{n - 2}{n}}
     - \int_M | H |^2 U_{\varepsilon}^q \mathd \mu \right] . \]
  Letting $\varepsilon \rightarrow 0$, we obtain
  \[ \int_M U_+^{q - 1} \Delta U \mathd \mu \leqslant - \frac{q - 1}{4 q^2}
     \left[ \frac{1}{2 B (n)^2} \left( \int_M U_+^{\frac{q n}{n - 2}} \mathd
     \mu \right)^{\frac{n - 2}{n}} - \int_M | H |^2 U_+^q \mathd \mu \right] .
  \]

\end{proof}

Let $n \geqslant 3$. Consider the compact ancient solution $F : M^n \times (-
\infty, 0) \rightarrow \mathbb{R}^{n + p}$. Set
\[ U = | \mathring{h} |^2 - \frac{1}{n^2} | H |^2 . \]
Then we have the following estimate.

\begin{lemma}
  \label{dtUposq}For any number $q > 1$, the following inequality holds along
  the flow.
  \[ \frac{\mathd}{\mathd t} \int_{M_t} U_+^q \mathd \mu_t \leqslant \left[ -
     A_1 (n, q) + A_2 (n, q) \left( \int_{M_t} | \mathring{h} |^n \mathd \mu_t
     \right)^{\frac{2}{n}} \right] \left( \int_{M_t} U_+^{\frac{q n}{n - 2}}
     \mathd \mu_t \right)^{\frac{n - 2}{n}}, \]
  where $A_1 (n, q), A_2 (n, q)$ are positive constants depending only on $n$ and $p$.
\end{lemma}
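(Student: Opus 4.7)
The plan is to differentiate $\int_{M_t} U_+^q\,\mathd\mu_t$ along the flow and combine the evolution equations of Lemma \ref{evo} with the integral Sobolev-type estimate of Lemma \ref{intULapU}. From Lemma \ref{evo}\,(ii),(iii) with $c=0$ one computes
\[
\partial_t U=\Delta U-2|\nabla\mathring{h}|^2+\tfrac{2}{n^2}|\nabla H|^2+2R_1-\tfrac{2(n+1)}{n^2}R_2,
\]
and the mixed gradient combination is non-positive for $n\geq3$ thanks to (\ref{dh2}). Since $q>1$, $U_+^q$ is $C^1$ with $\partial_t U_+^q=qU_+^{q-1}\partial_t U$, and $\partial_t\mathd\mu_t=-|H|^2\mathd\mu_t$, so
\[
\tfrac{\mathd}{\mathd t}\int U_+^q\,\mathd\mu_t\leq q\int U_+^{q-1}\Delta U\,\mathd\mu_t+2q\int U_+^{q-1}\bigl(R_1-\tfrac{n+1}{n^2}R_2\bigr)\mathd\mu_t-\int|H|^2U_+^q\,\mathd\mu_t.
\]
Applying Lemma \ref{intULapU} to the first integral and combining with the $-\int|H|^2U_+^q$ from the area element produces the coercive term $-\tfrac{q-1}{8qB(n)^2}\bigl(\int U_+^{qn/(n-2)}\bigr)^{(n-2)/n}$ together with a net coefficient $-\tfrac{3q+1}{4q}$ on $\int|H|^2U_+^q$, a negative reservoir I will use to absorb stray $|H|$-contributions.

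For the reaction, on $\{U>0\}$ we have $|H|^2<n^2|\mathring{h}|^2$ and $|h|^2\leq(n+1)|\mathring{h}|^2$. Expanding (\ref{R12R2/n}) via $R_2=|H|^2(|h|^2-P)$ and $|h|^2=|\mathring{h}|^2+|H|^2/n$, then using $\tfrac{n-1}{n^2}|H|^2\leq(n-1)|\mathring{h}|^2$ there, gives
\[
R_1-\tfrac{n+1}{n^2}R_2\leq\tfrac{3}{2}|\mathring{h}|^4+\tfrac{n-1}{n^2}|H|^2|\mathring{h}|^2-\tfrac{1}{n^3}|H|^4\leq\bigl(n+\tfrac{1}{2}\bigr)|\mathring{h}|^4
\]
pointwise. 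H\"older with exponents $\tfrac{n}{2}$ and $\tfrac{n}{n-2}$ then isolates the Willmore factor:
\[
\int U_+^{q-1}|\mathring{h}|^4\,\mathd\mu\leq\Bigl(\int|\mathring{h}|^n\,\mathd\mu\Bigr)^{2/n}\Bigl(\int(|\mathring{h}|^2U_+^{q-1})^{n/(n-2)}\,\mathd\mu\Bigr)^{(n-2)/n}.
\]

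The hard part will be bounding the residual $J:=\int(|\mathring{h}|^2U_+^{q-1})^{n/(n-2)}\mathd\mu$ by $\int U_+^{qn/(n-2)}\mathd\mu$ up to absorbable terms, since the ratio $|\mathring{h}|^2/U_+$ blows up on $\{U>0\}$ as $|H|^2\to n^2|\mathring{h}|^2$ and no pointwise estimate $|\mathring{h}|^2\leq CU_+$ is available. My plan is to apply the Michael-Simon Sobolev inequality of Proposition \ref{Sobo} to the test function $V:=|\mathring{h}|\,U_+^{(q-1)/2}$, whose $L^{2n/(n-2)}$-norm squared equals $J^{(n-2)/n}$, and run the Cauchy-Schwarz factorization from the proof of Lemma \ref{intULapU} to produce $J^{(n-2)/n}\leq C(n)\bigl[\int|\nabla V|^2\,\mathd\mu+\int V^2|H|^2\,\mathd\mu\bigr]$. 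Expanding $V^2|H|^2$ and $|\nabla V|^2$ via the Kato inequality $|\nabla|\mathring{h}||\leq|\nabla\mathring{h}|$ decomposes the right-hand side into: a $\int|H|^2U_+^q$-piece absorbed by the $-\tfrac{3q+1}{4q}\int|H|^2U_+^q$ reservoir, Dirichlet pieces absorbed by the $\int U_+^{q-2}|\nabla U|^2$ reservoir implicit in Lemma \ref{intULapU}, and a residual $|H|^4U_+^{q-1}$-piece that is dominated on $\{U>0\}$ by $n^4|\mathring{h}|^4U_+^{q-1}$ and therefore contributes $\leq C(n)T\cdot J^{(n-2)/n}$ with $T=(\int|\mathring{h}|^n)^{2/n}$; moving this to the left and collecting constants yields the stated inequality with explicit $A_1(n,q)$ and $A_2(n,q)$.
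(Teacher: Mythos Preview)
There is a genuine gap in your treatment of the reaction term, and it is precisely the ``hard part'' you flagged. Your estimate
\[
R_1-\tfrac{n+1}{n^2}R_2\ \leqslant\ \bigl(n+\tfrac{1}{2}\bigr)|\mathring{h}|^4
\]
does \emph{not} factor out $U$, so after multiplying by $U_+^{q-1}$ you are left with $\int U_+^{q-1}|\mathring{h}|^4$. As you correctly observe, the ratio $|\mathring{h}|^2/U_+$ is unbounded on $\{U>0\}$, and your Michael--Simon argument applied to $V=|\mathring{h}|\,U_+^{(q-1)/2}$ produces on the right a term $\int |H|^2 V^2=\int|\mathring{h}|^2|H|^2U_+^{q-1}$ which, after writing $|\mathring{h}|^2=U_++\tfrac{1}{n^2}|H|^2$, leaves a residual $\int |H|^4U_+^{q-1}$. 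Dominating this by $n^4\int|\mathring{h}|^4U_+^{q-1}\leqslant n^4 T\,J^{(n-2)/n}$ and ``moving it to the left'' gives $(1-C(n)T)J^{(n-2)/n}\leqslant(\cdots)$, which only yields a useful bound when $C(n)T<1$. The lemma, however, is stated for \emph{all} values of $T=(\int|\mathring{h}|^n)^{2/n}$, with fixed constants $A_1,A_2$; for large $T$ your absorption step fails and you obtain no inequality at all.

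The paper avoids this entirely by using the finer reaction estimate \eqref{R1nogreater} together with $R_2=|H|^2(|h|^2-P)$, which gives the exact factorization
\[
R_1-\tfrac{n+1}{n^2}R_2\ \leqslant\ |h|^4-\tfrac{n+1}{n^2}|H|^2|h|^2+\bigl(2|\mathring{h}|^2-\tfrac{n-1}{n^2}|H|^2\bigr)P\ \leqslant\ U\,(|h|^2+2P)
\]
(the last step uses $n\geqslant3$), and hence $R_1-\tfrac{n+1}{n^2}R_2\leqslant U_+\bigl(3|\mathring{h}|^2+\tfrac{1}{n}|H|^2\bigr)$. Now $U_+^{q-1}$ times the reaction is $\leqslant U_+^q\bigl(3|\mathring{h}|^2+\tfrac{1}{n}|H|^2\bigr)$, and on $\{U>0\}$ one has $|H|^2\leqslant n^2|\mathring{h}|^2$, so the whole reaction contribution is $\leqslant C(n,q)\int|\mathring{h}|^2U_+^q$. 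A single H\"older step then gives $C(n,q)\,T\,\bigl(\int U_+^{qn/(n-2)}\bigr)^{(n-2)/n}$ directly, with no circularity and no smallness hypothesis. The point is that the cruder bound \eqref{R12R2/n} you used loses exactly the $P$--dependence needed for this factorization; the computation above shows that \eqref{R12R2/n} alone leaves an unfactored $\tfrac{1}{2n^4}|H|^4$ residue which cannot be absorbed.
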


\begin{proof}
  From the evolution equations, we get
  \[ \left( \dt - \Delta \right) U = - 2 \left( | \nabla \mathring{h} |^2 -
     \frac{1}{n^2} | \nabla H |^2 \right) + 2 \left( R_1 - \frac{n + 1}{n^2}
     R_2 \right) . \]
  By {\eqref{dh2}} we have
  \begin{equation}
    | \nabla \mathring{h} |^2 - \frac{1}{n^2} | \nabla H |^2 \geqslant \frac{1}{3 n}
    | \nabla H |^2 . \label{dho2-n-2dH2}
  \end{equation}
  From {\eqref{R1nogreater}}, {\eqref{R2eq}}, we get
  \begin{eqnarray}
    &  & R_1 - \frac{n + 1}{n^2} R_2 \nonumber\\
    & \leqslant & | h |^4 + \left( 2 \normho^2 - \frac{2}{n} | H |^2 \right) P
    - \frac{n + 1}{n^2} | H |^2 (| h |^2 - P) \nonumber\\
    & \leqslant & U (| h |^2 + 2 P)  \label{R1n+1n-2R2}\\
    & \leqslant & U_+ (| h |^2 + 2 | \mathring{h} |^2)  \nonumber\\
    & = & U_+ \left( 3 | \mathring{h} |^2 + \frac{1}{n} | H |^2 \right) .\nonumber
  \end{eqnarray}
  Hence we obtain
  \begin{equation}
    \dt U \leqslant \Delta U - \frac{2}{3 n} | \nabla H |^2 + 2 U_+ \left( 3
    | \mathring{h} |^2 + \frac{1}{n} | H |^2 \right) . \label{dtUless}
  \end{equation}
  It follows from {\eqref{dtUless}} and Lemma \ref{intULapU} that
  \begin{eqnarray*}
    \frac{\mathd}{\mathd t} \int_{M_t} U_+^q \mathd \mu_t & \leqslant &
    \int_{M_t} q U_+^{q - 1} \frac{\partial}{\partial t} U \mathd \mu_t\\
    & \leqslant & q \int_{M_t} U_+^{q - 1} \Delta U \mathd \mu_t + 2 q
    \int_{M_t} U_+^q \left( 3 | \mathring{h} |^2 + \frac{1}{n} | H |^2 \right) \mathd \mu_t\\
    & \leqslant & - \frac{q - 1}{4 q} \cdot \frac{1}{2 B (n)^2} \left(
    \int_{M_t} U_+^{\frac{q n}{n - 2}} \mathd \mu_t \right)^{\frac{n -
    2}{n}}\\
    &  & + \int_{M_t} \left( \frac{q - 1}{4 q} | H |^2 + 2 q \left( 3
    | \mathring{h} |^2 + \frac{1}{n} | H |^2 \right) \right) U_+^q \mathd \mu_t .
  \end{eqnarray*}
  Note that $| H |^2 \leqslant n^2 | \mathring{h} |^2$ on the support of $U_+$. We
  use H{\"o}lder's inequality to get
  \begin{eqnarray*}
    &  & \int_{M_t} \left( \frac{q - 1}{4 q} | H |^2 + 2 q \left( 3
    | \mathring{h} |^2 + \frac{1}{n} | H |^2 \right) \right) U_+^q \mathd \mu_t\\
    & \leqslant & \left( \frac{n^2}{4} + 4 q n \right) \int_{M_t} | \mathring{h}
    |^2 U_+^q \mathd \mu_t\\
    & \leqslant & \left( \frac{n^2}{4} + 4 q n \right) \left( \int_{M_t} |
    \mathring{h} |^n \mathd \mu_t \right)^{\frac{2}{n}} \left( \int_{M_t}
    U_+^{\frac{q n}{n - 2}} \mathd \mu_t \right)^{\frac{n - 2}{n}} .
  \end{eqnarray*}
  Thus, we complete the proof.
\end{proof}

\begin{theorem}
  Let $F : M^n \times (- \infty, 0) \rightarrow \mathbb{R}^{n + p}$ $(n
  \geqslant 3)$ be a compact ancient solution of mean curvature flow. Suppose
  for all $t < 0$, there holds
  \[ \int_{M_t} | \mathring{h} |^n \mathd \mu_t < C (n), \]
  where $C (n)$ is a positive constant explicitly depending on $n$. Then $M_t$
  is a shrinking sphere.
\end{theorem}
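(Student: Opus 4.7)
The plan is to reduce the problem to the pointwise curvature pinching $|\mathring{h}|^2 \leq |H|^2/n^2$, which is equivalent to $U \leq 0$ with $U := |\mathring{h}|^2 - \tfrac{1}{n^2}|H|^2$ as introduced before Lemma \ref{dtUposq}. Since $|h|^2 = |\mathring{h}|^2 + \tfrac{1}{n}|H|^2 \leq \tfrac{n+1}{n^2}|H|^2$, a direct check shows this is strictly stronger than the Andrews--Baker pinching ($|h|^2 \leq \tfrac{1}{n-1}|H|^2$ for $n \geq 4$, or $\tfrac{4}{3n}|H|^2$ for $n = 3$), so the rigidity theorems of \cite{LN2017,RS2019} for pointwise pinched ancient solutions in $\mathbb{R}^{n+p}$ force $M_t$ to be a family of shrinking spheres. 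It therefore suffices to prove $U_+ \equiv 0$ on $M_t$ for every $t$, which I will do by showing $I(t) := \int_{M_t} U_+^q \mathd \mu_t \equiv 0$ for some fixed exponent $q > n/2$.

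Fix any $q > n/2$; for concreteness take $q = n$. Set the explicit constant
\[
C(n) := \left( \frac{A_1(n, q)}{2 A_2(n, q)} \right)^{n/2}
\]
with $A_1, A_2$ as in Lemma \ref{dtUposq}. Under the hypothesis $\int_{M_t} |\mathring{h}|^n \mathd \mu_t < C(n)$, the bracket in that lemma is $\leq -A_1(n,q)/2 =: -A < 0$, so
\[
\frac{dI}{dt} \leq -A \left( \int_{M_t} U_+^{qn/(n-2)} \mathd \mu_t \right)^{(n-2)/n}.
\]
The log-convexity of $r \mapsto \log \int_{M_t} U_+^r \mathd \mu_t$ (a direct instance of H\"{o}lder's inequality) gives the interpolation
\[
I(t) \leq \left( \int_{M_t} U_+^{qn/(n-2)} \mathd \mu_t \right)^{\theta} \left( \int_{M_t} U_+^{n/2} \mathd \mu_t \right)^{1-\theta}
\]
with $\theta = \tfrac{(2q-n)(n-2)}{n(2q-n+2)}$, and the condition $q > n/2$ implies $\theta \in (0, (n-2)/n)$. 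The endpoint bound $\int U_+^{n/2} \leq \int |\mathring{h}|^n < C(n)$ follows from $U_+ \leq |\mathring{h}|^2$. Combining the three inequalities produces the superlinear ODE
\[
\frac{dI}{dt} \leq -A'' I(t)^{\alpha}, \qquad \alpha := \frac{n-2}{n\theta} > 1,
\]
for an explicit constant $A'' > 0$ depending only on $n$ and $q$ (with $\alpha = (n+2)/n$ when $q = n$).

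Suppose toward contradiction that $I(t_0) > 0$ for some $t_0 \in (-\infty, 0)$. Since the inequality forces $I$ to be non-increasing in $t$, we have $I(s) \geq I(t_0) > 0$ for all $s < t_0$. Integrating the superlinear ODE on $[s, t_0]$ yields
\[
I(s)^{-(\alpha-1)} \leq I(t_0)^{-(\alpha-1)} - (\alpha-1) A'' (t_0 - s),
\]
whose right-hand side becomes negative once $t_0 - s > I(t_0)^{-(\alpha-1)}/((\alpha-1) A'')$; this contradicts $I(s) > 0$ and compactness of $M_s$ for every $s \in (-\infty, 0)$. Hence $I \equiv 0$, $U_+ \equiv 0$ pointwise, and the theorem follows from the rigidity results cited above. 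The main technical obstacle is ensuring the interpolation exponent $\alpha$ strictly exceeds $1$; this is precisely secured by the condition $q > n/2$, which forces $\theta < (n-2)/n$ in the log-convex bound and upgrades the bare monotonicity of $I$ to a blow-up estimate incompatible with the ancient time interval.
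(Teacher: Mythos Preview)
Your argument for $U_+\equiv 0$ is correct and takes a genuinely different route from the paper's. The paper applies Lemma~\ref{dtUposq} at \emph{two} exponents, $q=\tfrac{n}{2}$ and $q=\tfrac{n^2}{2(n-2)}$: the second shows $J_{n^2/(n-2)}(t):=\int U_+^{n^2/(2(n-2))}$ is nonincreasing, the first gives $\tfrac{d}{dt}J_n\le -\tilde C\,J_{n^2/(n-2)}^{(n-2)/n}$, and integrating on $[t_1,t_2]$ together with the uniform bound $J_n(t_1)<C(n)$ forces $J_{n^2/(n-2)}(t_2)=0$. You instead invoke the lemma at a \emph{single} exponent $q>n/2$ and close the loop with the H\"older/log-convexity interpolation $\int U_+^{q}\le\big(\int U_+^{qn/(n-2)}\big)^{\theta}\big(\int U_+^{n/2}\big)^{1-\theta}$ plus the endpoint bound $\int U_+^{n/2}\le\int|\mathring h|^n<C(n)$, producing the superlinear ODE $I'\le -A''I^{\alpha}$ with $\alpha=\tfrac{2q-n+2}{2q-n}>1$; the ancient interval then forces $I\equiv 0$. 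Both arguments are clean; yours trades the second application of the lemma for an interpolation step and yields a slightly different explicit $C(n)$.

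Two small points. First, the paper does not pass directly from $U\le 0$ to \cite{LN2017}: it uses \eqref{dtUless} (now with $U_+\equiv 0$, so $\partial_t U\le \Delta U-\tfrac{2}{3n}|\nabla H|^2$) and the strong maximum principle to rule out $|H|=0$, since a zero of $|H|$ would force $U\equiv 0$, hence $|\nabla H|\equiv 0$, hence a compact minimal submanifold of $\mathbb R^{n+p}$, which is impossible. You skip this check; depending on the precise hypotheses in \cite{LN2017,RS2019} you may need to add it. Second, for $n=3$ one has $\tfrac{n+1}{n^2}=\tfrac{4}{9}=\tfrac{4}{3n}$, so your pinching is exactly the Andrews--Baker threshold there rather than ``strictly stronger''; this does not affect the application of the cited rigidity results, but the wording should be adjusted.
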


\begin{proof}
  We choose $C (n)$ such that
  \[ C (n) < \min_{q \in \{ \frac{n}{2}, \frac{n^2}{2 (n - 2)} \}} \left(
     \frac{A_1 (n, q)}{A_2 (n, q)} \right)^{\frac{n}{2}} . \]
  Here $A_1 (n, q), A_2 (n, q)$ are constants in Lemma \ref{dtUposq}. Letting
  $J_r (t) = \int_{M_t} U_+^{r / 2} \mathd \mu_t$, we get
  \[ \frac{\mathd}{\mathd t} J_n (t) \leqslant - \tilde{C} (n) (J_{n^2 / (n -
     2)} (t))^{\frac{n - 2}{n}}, \]
  where $\tilde{C} (n)$ is a positive constant. Hence, for any $t_1 < t_2 <
  0$, we have
  \[ J_n (t_1) - J_n (t_2) \geqslant \tilde{C} (n) \int_{t_1}^{t_2} (J_{n^2 /
     (n - 2)} (t))^{\frac{n - 2}{n}} \mathd t. \]
  It follows from Lemma \ref{dtUposq} that $J_{n^2 / (n - 2)} (t)$ is
  decreasing. Thus
  \[ \int_{t_1}^{t_2} (J_{n^2 / (n - 2)} (t))^{\frac{n - 2}{n}} \mathd t
     \geqslant (t_2 - t_1) (J_{n^2 / (n - 2)} (t_2))^{\frac{n - 2}{n}} . \]
  On the other hand, we have
  \[ J_n (t_1) - J_n (t_2) \leqslant J_n (t_1) \leqslant \int_{M_{t_1}} |
     \mathring{h} |^n \mathd \mu_{t_1} < C (n) . \]
  Hence we get
  \[ (t_2 - t_1) (J_{n^2 / (n - 2)} (t_2))^{\frac{n - 2}{n}} < \frac{C
     (n)}{\tilde{C} (n)} . \]
  Letting $t_1 \rightarrow - \infty$, we obtain $J_{n^2 / (n - 2)} (t_2) = 0$.
  Thus $U \leqslant 0$ for all $t$.

  From {\eqref{dtUless}} we get
  \[ \dt U \leqslant \Delta U - \frac{2}{3 n} | \nabla H |^2 . \]
  Assume $| H |$ attains 0 at $(x_0, t_0)$. Then $U$ also attains 0 at this
  point. The strong maximum principle implies that $U \equiv 0$. Then we get $|
  \nabla H | \equiv 0$. Thus, $M_{t_0}$ is minimal, which is not possible. So
  $H$ is non-vanishing along the flow. Applying the rigidity theorem for ancient
  solutions in \cite{LN2017}, we obtain the conclusion.
\end{proof}

\subsection{Ancient solutions in the sphere}\

First we investigate the integral pinched ancient solution in dimension two.

\begin{theorem}
  Let $F : M^2 \times (- \infty, 0) \rightarrow \mathbb{S}^{2 + p}$ be a
  compact ancient solution of mean curvature flow. Suppose for all $t < 0$,
  there holds
  \[ \int_{M_t} | \mathring{h} |^2 \mathd \mu_t < C, \]
  where $C$ is an explicit positive constant. Then $M_t$ is either a shrinking
  spherical cap or a totally geodesic sphere.
\end{theorem}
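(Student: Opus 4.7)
The plan is to adapt the two-dimensional Euclidean argument above to the sphere, exploiting the fact that the ambient curvature $c=1$ generates additional helpful negative terms. For a compact surface in $\mathbb{S}^{2+p}$ the Gauss equation reads $K = 1 + \tfrac{1}{4}|H|^2 - \tfrac{1}{2}|\mathring{h}|^2$, so Gauss-Bonnet yields
\[ \int_{M_t}\bigl(\tfrac{1}{4}|H|^2-\tfrac{1}{2}|\mathring{h}|^2\bigr)\,\mathd\mu_t \;=\; 2\pi\chi(M) - \mathrm{Area}(M_t). \]
Unlike in the Euclidean case this right-hand side is no longer constant in $t$; its derivative equals $\int_{M_t}|H|^2\,\mathd\mu_t$, so setting $I(t) = \int_{M_t}|\mathring{h}|^2\,\mathd\mu_t$ we have
\[ \tfrac{\mathd}{\mathd t}I(t) \;=\; \tfrac{\mathd}{\mathd t}\!\int_{M_t}\bigl(\tfrac{3}{2}|\mathring{h}|^2-\tfrac{1}{4}|H|^2\bigr)\,\mathd\mu_t \;+\; \int_{M_t}|H|^2\,\mathd\mu_t. \]
I would compute the first right-hand term via Lemma \ref{evo} with $n=2$, $c=1$. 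The gradient estimate \eqref{3delho2} and the reaction estimate \eqref{3R12R2} used in the Euclidean proof carry over verbatim; the new $-2nc|\mathring{h}|^2$ and $+2nc|H|^2$ reaction contributions produce an extra $-6\int|\mathring{h}|^2-\int|H|^2$ after the $(3/2,-1/4)$ weighting. Combining these with the $+\int|H|^2$ above, one obtains
\[ \tfrac{\mathd}{\mathd t}I(t) \;\le\; \int_{M_t}\bigl(-|\nabla\mathring{h}|^2 + 9|\mathring{h}|^4 - \tfrac{1}{2}|H|^2|\mathring{h}|^2 - 6|\mathring{h}|^2\bigr)\,\mathd\mu_t. \]

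The next step is to bound $\int|\mathring{h}|^4$ via a Sobolev inequality. Viewing $M_t \subset \mathbb{S}^{2+p} \subset \mathbb{R}^{3+p}$ through the standard embedding, the Euclidean mean curvature vector of $M_t$ satisfies $|\vec H_{\mathbb{R}}|^2 = |H|^2 + 4$. Applying Proposition \ref{Sobo} to $|\mathring{h}|^2$ in $\mathbb{R}^{3+p}$, then using $|\nabla|\mathring{h}|^2|\le 2|\mathring{h}||\nabla\mathring{h}|$, H\"older's inequality and $I(t)<C$, I expect
\[ \int_{M_t}|\mathring{h}|^4\,\mathd\mu_t \;\le\; 2B(2)^2 C \int_{M_t}\bigl(4|\nabla\mathring{h}|^2 + |H|^2|\mathring{h}|^2 + 4|\mathring{h}|^2\bigr)\,\mathd\mu_t. \]
Choosing the explicit constant $C < 1/(144\,B(2)^2)$ makes $9\int|\mathring{h}|^4$ strictly absorbable into the three negative terms of the previous display, producing a differential inequality $\tfrac{\mathd}{\mathd t}I(t) \le -\lambda I(t)$ with an explicit $\lambda>0$ (taking $\lambda = 11/2$ works). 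Integrating backwards, if $I(t_0)>0$ at some $t_0$, then $I(t_1) \ge e^{\lambda(t_0-t_1)}I(t_0) \to \infty$ as $t_1\to-\infty$, contradicting $I(t_1)<C$.

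Hence $\mathring{h}\equiv 0$, and each $M_t$ is totally umbilical; a compact totally umbilical surface in $\mathbb{S}^{2+p}$ is either a totally geodesic $\mathbb{S}^2$ (static, $|H|\equiv 0$) or a small round $2$-sphere, yielding the stated dichotomy. The main bookkeeping subtlety is that the Sobolev step produces an unwanted constant-in-$|\mathring{h}|^2$ term through the $+4$ in $|H|^2+4$; what makes the argument close is that the positive ambient curvature generates precisely the matching negative term $-6\int|\mathring{h}|^2$, which suffices to absorb it. No essentially new analytic idea is needed beyond the embedding $\mathbb{S}^{2+p}\hookrightarrow\mathbb{R}^{3+p}$ and the Euclidean surface computation carried out in the preceding theorem.
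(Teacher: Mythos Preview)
Your proposal is correct and follows essentially the same route as the paper: Gauss--Bonnet plus the weighted combination $\tfrac{3}{2}|\mathring{h}|^2-\tfrac{1}{4}|H|^2$ to get the key differential inequality $\tfrac{\mathd}{\mathd t}I\le\int(-|\nabla\mathring{h}|^2+9|\mathring{h}|^4-\tfrac{1}{2}|H|^2|\mathring{h}|^2-6|\mathring{h}|^2)\,\mathd\mu$, then the Michael--Simon inequality applied via the embedding $\mathbb{S}^{2+p}\hookrightarrow\mathbb{R}^{3+p}$ to absorb $9\int|\mathring{h}|^4$ and force exponential decay of $I$. The only differences are cosmetic bookkeeping (you track the area term separately rather than folding the constant $1$ into the integrand, and you use a slightly cruder Young step yielding $C<1/(144B(2)^2)$ and $\lambda=11/2$ where the paper obtains $C=1/(54B(2)^2)$ and $\lambda=4$).
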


\begin{proof}
  The Gauss-Bonnet formula  implies
  \[ \int_{M_t} \left( \frac{1}{4} | H |^2 - \frac{1}{2} | \mathring{h} |^2 + 1
     \right) \mathd \mu_t = 2 \pi \chi (M) . \]
  Noting that $\dt \mathd \mu_t = - | H |^2 \mathd \mu_t$, we have
  \begin{eqnarray*}
    \frac{\mathd}{\mathd t} \int_{M_t} | \mathring{h} |^2 \mathd \mu_t & = &
    \frac{\mathd}{\mathd t} \int_{M_t} \left( \frac{3}{2} | \mathring{h} |^2 -
    \frac{1}{4} | H |^2 - 1 \right) \mathd \mu_t\\
    & = & \int_{M_t} \left[ \frac{\partial}{\partial t} \left( \frac{3}{2} |
    \mathring{h} |^2 - \frac{1}{4} | H |^2 \right) - | H |^2 \left( \frac{3}{2} |
    \mathring{h} |^2 - \frac{1}{4} | H |^2 - 1 \right) \right] \mathd \mu_t\\
    & = & \int_{M_t} \left[ - 3 | \nabla \mathring{h} |^2 + \frac{1}{2} | \nabla H
    |^2 + 3 R_1 - 2 R_2 \right.\\
    &  & \left. - 6 | \mathring{h} |^2 - | H |^2 \left( \frac{3}{2} | \mathring{h} |^2
    - \frac{1}{4} | H |^2 \right) \right] \mathd \mu_t .
  \end{eqnarray*}
  Using {\eqref{3delho2}} and {\eqref{3R12R2}} again, we get
  \[ \frac{\mathd}{\mathd t} \int_{M_t} | \mathring{h} |^2 \mathd \mu_t \leqslant
     \int_{M_t} \left( 9 | \mathring{h} |^4 - | \nabla \mathring{h} |^2 - \frac{1}{2} |
     H |^2 | \mathring{h} |^2 - 6 | \mathring{h} |^2 \right) \mathd \mu_t . \]
  Through the composition of immersions $M^n \rightarrow \mathbb{S}^{n + p}
  \rightarrow \mathbb{R}^{n + p + 1}$, $M^n$ can be regarded as a submanifold
  in $\mathbb{R}^{n + p + 1}$, whose mean curvature is $\sqrt{| H |^2 + n^2}$.
  Thus Proposition \ref{Sobo} implies
  \begin{eqnarray*}
    \left( \int_{M_t} | \mathring{h} |^4 \mathd \mu_t \right)^{\frac{1}{2}} &
    \leqslant & B (2) \int_{M_t} \left( 2 | \mathring{h} | | \nabla \mathring{h} | + |
    \mathring{h} |^2 \sqrt{| H |^2 + 4} \right) \mathd \mu_t\\
    & \leqslant & B (2) \left( \int_{M_t} | \mathring{h} |^2 \mathd \mu_t
    \right)^{\frac{1}{2}} \left[ \int_{M_t} \left( 2 | \nabla \mathring{h} | + |
    \mathring{h} |^{} \sqrt{| H |^2 + 4} \right)^2 \mathd \mu_t
    \right]^{\frac{1}{2}}\\
    & \leqslant & B (2) \sqrt{C} \left[ \int_{M_t} (6 | \nabla \mathring{h} |^2 +
    3 | \mathring{h} |^2 (| H |^2 + 4)) \mathd \mu_t \right]^{\frac{1}{2}} .
  \end{eqnarray*}
  Setting $C = \frac{1}{54 B (2)^2}$, we get
  \[  \int_{M_t} | \mathring{h} |^4 \mathd \mu_t  \leqslant
     \frac{1}{9} \int_{M_t} \left( | \nabla \mathring{h} |^2 + \frac{1}{2} |
     \mathring{h} |^2 | H |^2 + 2 | \mathring{h} |^2 \right) \mathd \mu_t . \]
  Hence we have
  \[ \frac{\mathd}{\mathd t} \int_{M_t} | \mathring{h} |^2 \mathd \mu_t \leqslant -
     4 \int_{M_t} | \mathring{h} |^2 \mathd \mu_t . \]
  Therefore, we obtain $\int_{M_t} | \mathring{h} |^2 \mathd \mu_t = 0$ for all
  $t$.
\end{proof}

Now we let $n \geqslant 3$. Consider the compact ancient solution $F : M^n
\times (- \infty, 0) \rightarrow \mathbb{S}^{n + p}$. Set
\[ U = | \mathring{h} |^2 - \frac{1}{n^2} | H |^2 . \]
We have the following estimate.

\begin{lemma}
  \label{dtUposqinS}Along the mean curvature flow, there holds
  \begin{eqnarray*}
    \frac{\mathd}{\mathd t} \int_{M_t} U_+^{\frac{n}{2}} \mathd \mu_t &
    \leqslant & - D_1 (n) \int_{M_t} U_+^{\frac{n}{2}} \mathd \mu_t\\
    &  & - \left[ D_2 (n) - D_3 (n) \left( \int_{M_t} | \mathring{h} |^n \mathd
    \mu_t \right)^{\frac{2}{n}} \right] \left( \int_{M_t} U_+^{\frac{n^2}{2 (n
    - 2)}} \mathd \mu_t \right)^{\frac{n - 2}{n}},
  \end{eqnarray*}
  where $D_1 (n), D_2 (n), D_3 (n)$ are positive constants depending only on $n$.
\end{lemma}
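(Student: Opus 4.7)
The plan is to follow the structure of Lemma~\ref{dtUposq}, but carefully track the two effects of the ambient curvature $c=1$: the extra reaction terms $-2nc|\mathring{h}|^2$ and $+2nc|H|^2$ in Lemma~\ref{evo}, and the fact that the ambient mean curvature of $M$ regarded as a submanifold of $\mathbb{R}^{n+p+1}$ (via $\mathbb{S}^{n+p}\hookrightarrow\mathbb{R}^{n+p+1}$) is $\sqrt{|H|^2+n^2}$ rather than $|H|$. The first effect supplies the pure damping term $-D_1(n)\int U_+^{n/2}\,d\mu_t$ that the Euclidean version does not contain, and is exactly what makes the statement stronger.

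First I would compute $(\partial_t-\Delta)U$ using Lemma~\ref{evo}(iii) and (ii). The same algebra that produced \eqref{dho2-n-2dH2} and \eqref{R1n+1n-2R2} still works, so all of the reaction estimates from the proof of Lemma~\ref{dtUposq} carry over verbatim, and in addition one collects the curvature terms
\[ -2n|\mathring{h}|^2-\tfrac{2}{n}|H|^2 \;=\; -2nU-\tfrac{4}{n}|H|^2, \]
giving, on the support of $U_+$,
\[ \partial_t U \;\leq\; \Delta U-\tfrac{2}{3n}|\nabla H|^2+2U_+\bigl(3|\mathring{h}|^2+\tfrac{1}{n}|H|^2\bigr)-2nU_+-\tfrac{4}{n}|H|^2. \]
Next I would reprove Lemma~\ref{intULapU} with $|H|$ replaced by $\sqrt{|H|^2+n^2}$, applying Proposition~\ref{Sobo} to $M$ immersed in $\mathbb{R}^{n+p+1}$; this produces the inequality
\[ \int_M U_+^{q-1}\Delta U\,d\mu \leq -\tfrac{q-1}{4q^2}\Bigl[\tfrac{1}{2B(n)^2}\bigl(\tint_M U_+^{qn/(n-2)}\bigr)^{\!(n-2)/n}-\tint_M(|H|^2+n^2)U_+^q\,d\mu\Bigr]. \]

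Then I would multiply the evolution of $U$ by $\tfrac{n}{2}U_+^{n/2-1}$ and integrate, dropping the nonpositive contribution of $\partial_t d\mu_t=-|H|^2 d\mu_t$. Taking $q=n/2$ in the spherical Lemma~\ref{intULapU} handles the Laplace term, while the pointwise reaction term $2U_+(3|\mathring{h}|^2+\tfrac{1}{n}|H|^2)$ is controlled exactly as in the proof of Lemma~\ref{dtUposq}: on the support of $U_+$ one has $|H|^2\leq n^2|\mathring{h}|^2$, and H\"older's inequality converts it into $C(n)\bigl(\int|\mathring{h}|^n\bigr)^{\!2/n}\bigl(\int U_+^{n^2/(2(n-2))}\bigr)^{\!(n-2)/n}$, which is where the coefficient in front of $D_3(n)$ comes from. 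The new ambient terms $-2nU_+-\tfrac{4}{n}|H|^2$, combined with the $+n^2 U_+^{n/2}$ piece coming from $\int n^2 U_+^{n/2}\,d\mu$ in the Sobolev term, yield after a book-keeping of signs a net coefficient $-D_1(n)<0$ in front of $\int U_+^{n/2}\,d\mu_t$.

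The main obstacle is the sign-tracking in that final step: the $+n^2\int U_+^{n/2}\,d\mu$ produced by the spherical Sobolev inequality must be strictly dominated by the damping $-2n\int U_+^{n/2}\,d\mu$ (together with the analogous cancellation for the $|H|^2 U_+^{n/2}$ terms) so that the coefficient $D_1(n)$ in the final estimate remains positive. Since the Sobolev contribution carries the small prefactor $\tfrac{n-2}{4n}$ coming from the $-\tfrac{q-1}{4q^2}$ factor, this domination holds with plenty of room, and one reads off positive constants $D_1(n),D_2(n),D_3(n)$ of the required form, completing the proof.
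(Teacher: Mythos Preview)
Your approach is essentially the paper's, and it works. Two small points of comparison. First, the paper does \emph{not} drop the volume contribution $-|H|^2 U_+^{n/2}$: keeping it produces an exact cancellation with the $\tfrac{1}{n}|H|^2$ piece of the reaction term, so that after multiplying by $\tfrac{n}{2}U_+^{n/2-1}$ one is left with the clean integrand $nU_+^{n/2}(3|\mathring h|^2-n)$, and no stray $|H|^2 U_+^{n/2}$ terms survive from the reaction side. Second, the ``analogous cancellation for the $|H|^2 U_+^{n/2}$ terms'' you allude to does not actually take place in your version: once you multiply $-\tfrac{4}{n}|H|^2$ by $\tfrac{n}{2}U_+^{n/2-1}$ it carries the power $U_+^{n/2-1}$, not $U_+^{n/2}$, and hence cannot be matched against the Sobolev and reaction $|H|^2 U_+^{n/2}$ pieces. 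This is harmless---simply discard that term (it is nonpositive) and absorb the remaining positive $|H|^2 U_+^{n/2}$ contributions via $|H|^2\le n^2|\mathring h|^2$ on the support of $U_+$, at the cost of a slightly larger $D_3(n)$ than the paper's $\tfrac{n^2+10n}{4}$. Your sign check for $D_1(n)$ is correct: the Sobolev $n^2$-piece enters with prefactor $\tfrac{n(n-2)}{4}$, which is dominated by the damping $n^2$ coming from $-2nU_+$, giving $D_1(n)=\tfrac{3n^2+2n}{4}>0$ exactly as in the paper.
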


\begin{proof}
  Using {\eqref{dho2-n-2dH2}} and {\eqref{R1n+1n-2R2}} again, we have
  \begin{eqnarray*}
    &&\left( \dt - \Delta \right) U \\ & = & - 2 \left( | \nabla \mathring{h} |^2 -
    \frac{1}{n^2} | \nabla H |^2 \right) + 2 \left( R_1 - \frac{n + 1}{n^2}
    R_2 \right) - 2 n \left( \normho^2 + \frac{1}{n^2} | H |^2 \right)\\
    & \leqslant & 2 U_+ \left( 3 | \mathring{h} |^2 + \frac{1}{n} | H |^2 \right) - 2 n \left(
      \normho^2 + \frac{1}{n^2} | H |^2 \right)\\
    & \leqslant & 2 U_+ \left( 3 | \mathring{h} |^2 + \frac{1}{n} | H |^2 - n
    \right) .
  \end{eqnarray*}
  Hence we get
  \begin{eqnarray*}
    \frac{\mathd}{\mathd t} \int_{M_t} U_+^{\frac{n}{2}} \mathd \mu_t & = &
    \int_{M_t} \left( \frac{n}{2} U_+^{\frac{n}{2} - 1}
    \frac{\partial}{\partial t} U - | H |^2 U_+^{\frac{n}{2}} \right) \mathd
    \mu_t\\
    & \leqslant & \int_{M_t} \left( \frac{n}{2} U_+^{\frac{n}{2} - 1} \Delta
    U + n U_+^{\frac{n}{2}} (3 | \mathring{h} |^2 - n) \right) \mathd \mu_t .
  \end{eqnarray*}
  Regarding $M^n$ as a submanifold in $\mathbb{R}^{n + p + 1}$ with mean
  curvature $\sqrt{| H |^2 + n^2}$, we get from Lemma \ref{intULapU} that
  \[ \int_{M_t} U_+^{\frac{n}{2} - 1} \Delta U \mathd \mu_t \leqslant -
     \frac{n - 2}{4 n^2 B (n)^2} \left( \int_{M_t} U_+^{\frac{n^2}{2 (n - 2)}}
     \mathd \mu_t \right)^{\frac{n - 2}{n}} + \frac{n - 2}{2 n^2} \int_{M_t}
     (| H |^2 + n^2) U_+^{\frac{n}{2}} \mathd \mu_t . \]
  Thus we get
  \begin{eqnarray*}
    \frac{\mathd}{\mathd t} \int_{M_t} U_+^{\frac{n}{2}} \mathd \mu_t &
    \leqslant & - \frac{n - 2}{8 n B (n)^2} \left( \int_{M_t}
    U_+^{\frac{n^2}{2 (n - 2)}} \mathd \mu_t \right)^{\frac{n - 2}{n}}\\
    &  & + \frac{n - 2}{4 n} \int_{M_t} (| H |^2 + n^2) U_+^{\frac{n}{2}}
    \mathd \mu_t + \int_{M_t} n U_+^{\frac{n}{2}} (3 | \mathring{h} |^2 - n) \mathd
    \mu_t\\
    & \leqslant & - \frac{n - 2}{8 n B (n)^2} \left( \int_{M_t}
    U_+^{\frac{n^2}{2 (n - 2)}} \mathd \mu_t \right)^{\frac{n - 2}{n}}\\
    &  & + \frac{n^2 + 10 n}{4} \int_{M_t} | \mathring{h} |^2 U_+^{\frac{n}{2}}
    \mathd \mu_t - \frac{3 n^2 + 2 n}{4} \int_{M_t} U_+^{\frac{n}{2}} \mathd
    \mu_t .
  \end{eqnarray*}
  Using the following H{\"o}lder inequality
  \[ \int_{M_t} | \mathring{h} |^2 U_+^{\frac{n}{2}} \mathd \mu_t \leqslant \left(
     \int_{M_t} | \mathring{h} |^n \mathd \mu_t \right)^{\frac{2}{n}} \left(
     \int_{M_t} U_+^{\frac{n^2}{2 (n - 2)}} \mathd \mu_t \right)^{\frac{n -
     2}{n}}, \]
  we complete the proof.
\end{proof}

\begin{theorem}
  Let $F : M^n \times (- \infty, 0) \rightarrow \mathbb{S}^{n + p}$ $(n
  \geqslant 3)$ be a compact ancient solution of mean curvature flow. Suppose
  for all $t < 0$, there holds
  \[ \int_{M_t} | \mathring{h} |^n \mathd \mu_t < C (n), \]
  where $C (n)$ is a positive constant explicitly depending on $n$. Then $M_t$
  is either a shrinking spherical cap or a totally geodesic sphere.
\end{theorem}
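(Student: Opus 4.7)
The plan is to mirror the $n\geq 3$ Euclidean argument carried out just above, with Theorem \ref{thm-2} of this paper playing the role that the LN2017 rigidity theorem played there. Setting $U = |\mathring{h}|^2 - \frac{1}{n^2}|H|^2$, I would start from the $L^{n/2}$ evolution estimate already provided by Lemma \ref{dtUposqinS}. By choosing $C(n)$ small enough that $D_3(n)\,C(n)^{2/n} \leq D_2(n)$, the hypothesis $\int_{M_t}|\mathring{h}|^n\, d\mu_t < C(n)$ forces the bracketed factor in Lemma \ref{dtUposqinS} to be nonnegative at every $t$, and the estimate collapses to the pure exponential-decay inequality
$$
\frac{d}{dt}\int_{M_t} U_+^{n/2}\, d\mu_t \;\leq\; -D_1(n)\int_{M_t} U_+^{n/2}\, d\mu_t .
$$

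The next step is a bootstrap: on the support of $U_+$ we have $|\mathring{h}|^2 > \frac{1}{n^2}|H|^2 \geq 0$, hence $U_+ \leq |\mathring{h}|^2$, and therefore $\int_{M_s} U_+^{n/2}\, d\mu_s \leq \int_{M_s} |\mathring{h}|^n\, d\mu_s < C(n)$ uniformly in $s$. Integrating the differential inequality from $s$ to $t$ and letting $s \to -\infty$ then forces $\int_{M_t} U_+^{n/2}\, d\mu_t = 0$ for every $t < 0$. Consequently $U \leq 0$ holds pointwise along the entire ancient solution, i.e., the uniform pinching $|\mathring{h}|^2 \leq \frac{1}{n^2}|H|^2$.

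To finish, I would feed this back into Theorem \ref{thm-2}. Writing $|h|^2 - \kappa|H|^2 = |\mathring{h}|^2 + \bigl(\tfrac{1}{n}-\kappa\bigr)|H|^2 \leq \bigl(\tfrac{n+1}{n^2}-\kappa\bigr)|H|^2$, an elementary algebra check confirms that each admissible $\kappa$ in Theorem \ref{thm-2} satisfies $\kappa \geq \tfrac{n+1}{n^2}$ whenever $n \geq 3$: the inequality $\tfrac{4}{3n} \geq \tfrac{n+1}{n^2}$ (for $p \geq 2$) and the analogous inequalities for $\tfrac{3}{n+2}$ and $\tfrac{4(n-1)}{n(n+2)}$ (for $p=1$) all reduce to $n \geq 3$. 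Hence $\max_{M_t}(|h|^2 - \kappa|H|^2) \leq 0 < \alpha$ uniformly in $t$, the $\limsup$ hypothesis of Theorem \ref{thm-2} is trivially satisfied, and we conclude that $M_t$ is a shrinking spherical cap or a totally geodesic sphere.

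The main obstacle is the joint calibration of $C(n)$: it must be small enough both to activate the exponential-decay form of Lemma \ref{dtUposqinS} via $D_3(n)\,C(n)^{2/n} \leq D_2(n)$, and to be compatible with the trivial Hölder bootstrap $U_+^{n/2} \leq |\mathring{h}|^n$ used to seed the iteration. The explicit value of $C(n)$ is then pinned down by the Sobolev constant $B(n)$ and the computable ratios $D_2(n)/D_3(n)$, but beyond this careful tracking of constants no new analytic ingredient is required over and above what the preceding subsection has already assembled.
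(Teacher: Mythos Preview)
Your proposal is correct and follows essentially the same route as the paper: choose $C(n)$ so that the bracket in Lemma~\ref{dtUposqinS} is nonnegative, deduce exponential decay of $\int_{M_t} U_+^{n/2}\,d\mu_t$, use the uniform bound $U_+^{n/2}\leq |\mathring h|^n$ to force this integral to vanish identically, and then invoke the pointwise rigidity results of Section~3. The paper is terser---it simply writes ``This implies $\int_{M_t} U_+^{n/2}\,d\mu_t\equiv 0$'' and ``Applying the rigidity theorem in the previous section''---whereas you have spelled out the integration-in-$s$ argument and the algebraic verification that $|\mathring h|^2\leq \frac{1}{n^2}|H|^2$ feeds into Theorem~\ref{thm-2}; but these are expansions of the same proof, not a different strategy.
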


\begin{proof}
  By choosing $C (n) = \left( \frac{D_2 (n)}{D_3 (n)} \right)^{n / 2}$, we get
  from Lemma \ref{dtUposqinS} that
  \[ \frac{\mathd}{\mathd t} \int_{M_t} U_+^{\frac{n}{2}} \mathd \mu_t
     \leqslant - D_1 (n) \int_{M_t} U_+^{\frac{n}{2}} \mathd \mu_t . \]
  This implies $\int_{M_t} U_+^{\frac{n}{2}} \mathd \mu_t \equiv 0$. Therefore,
  $M_t$ satisfies $| \mathring{h} |^2 \leqslant \frac{1}{n^2} | H
  |^2$ for all $t < 0$. Applying the rigidity theorem in the previous section, we obtain the
  conclusion.
\end{proof}

\begin{proof*}{Proof of Theorem \ref{thm-integral}}
  By the assumption, there exists $t_0 < 0$, such that $\int_{M_t} | \mathring{h}
  |^n \mathd \mu_t < C (n)$ for all $t \in (- \infty, t_0)$. Then combining
  the results of the present section, we complete the proof of Theorem 4.
\end{proof*}


\begin{thebibliography}{10}
  \bibitem{AB-JDG}\textit{\emph{B. Andrews and C. Baker,}}\textit{
  Mean curvature flow of pinched submanifolds to spheres}, J. Differential
  Geom., \textbf{85} (2010), 357-395.

  \bibitem{ADS-JDG} S. Angenent, P. Daskalopoulos and N. Sesum, \emph{Unique
  asymptotics of ancient convex mean curvature flow solutions}. J. Differential
  Geom. \textbf{111} (2019), 381-455.

  \bibitem{baker2011mean} C. Baker, \textit{The mean curvature flow
  of submanifolds of high codimension}, arXiv:1104.4409v1, 2011.

  \bibitem{BC} S. Brendle and K. Choi, \emph{Uniqueness of convex ancient
  solutions to mean curvature flow in ${\bf R}^{3}$}. Invent. Math.
  \textbf{217} (2019), 35-76.

  \bibitem{BC-1} S. Brendle and K. Choi, \emph{Uniqueness of convex
  ancient solutions to mean curvature flow in higher dimensions}. arXiv:1804.00018,
  2018.

  \bibitem{BUS-DukeMJ} S. Brendle, G. Huisken and C. Sinestrari, \emph{Ancient
  solutions to the Ricci flow with pinched curvature}. Duke Math. J.
  158 (2011), 537-551.

  \bibitem{BIS} P. Bryan, M. N. Ivaki and J. Scheuer, \emph{On the
  classification of ancient solutions to curvature flows on the sphere},
  arXiv:1604.01694v2, 2016.

  \bibitem{BL} P. Bryan and J. Louie, \emph{Classification of convex
  ancient solutions to curve shortening flow on the sphere}. J. Geom.
  Anal. 26 (2016), 858-872.

  \bibitem{CLY-AJM} S.Y. Cheng, P. Li and S. T. Yau, \emph{Heat equations
  on minimal submanifolds and their applications}. Amer. J. Math. \textbf{106}
  (1984), 1033-1065.

  \bibitem{CdoK} S. S. Chern, M. do Carmo and S. Kobayashi, \emph{Minimal
  submanifolds of a sphere with second fundamental form of constant
  length}. In: Functional Analysis and Related Fields, Springer-Verlag,
  Berlin, 1970, pp. 59-75.

  \bibitem{CM} K. Choi and C. Mantoulidis, \emph{Ancient gradient flows
  of elliptic functionals and Morse index}, arXiv:1902.07697v2, 2019.

  \bibitem{DHS-JDG} P. Daskalopoulos, R. Hamilton and N. Sesum, \emph{Classification
  of compact ancient solutions to the curve shortening flow.} J. Differential
  Geom. \textbf{84} (2010), 455-464.

  \bibitem{Daskalopoulos-PCM} P. Daskalopoulos, \emph{Ancient solutions
  to geometric flows. }Proceedings of the International Congress of
  Mathematicians-Seoul 2014. Vol. III, 773-788, Kyung Moon Sa, Seoul,
  2014.


  \bibitem{Haslhofer-Hershkovits-CAG} R. Haslhofer and O. Hershkovits,
  \emph{Ancient solutions of the mean curvature flow}, Comm. Anal. Geom.
  \textbf{24} (2016), 593-604.

  \bibitem{MR772132} G. Huisken, \textit{Flow by mean curvature of
  convex surfaces into spheres}, J. Differential Geom., \textbf{20}
  (1984), 237-266.

  \bibitem{Huisken-86-Invent Math} G. Huisken, \emph{Contracting convex
  hypersurfaces in Riemannian manifolds by their mean curvature}. Invent.
  Math. \textbf{84} (1986), 463-480.

  \bibitem{Huisken-87-MathZ} G. Huisken, \emph{Deforming hypersurfaces
  of the sphere by their mean curvature}, Math. Z. \textbf{195} (1987),
  205-219.


  \bibitem{HS2015} G. Huisken, C. Sinestrari, \textit{Convex ancient
  solutions of the mean curvature flow}, J. Differential Geom., \textbf{101}
  (2015), 267-287.

  \bibitem{Lawson} B. Lawson, Local rigidity theorems for minimal hypersurfaces,
  \emph{Ann. of Math.}, {\bf 89}(1969), 187-197.

  \bibitem{Lei-Xu-1} L. Lei and H. W. Xu, \emph{An optimal convergence
  theorem for mean curvature flow of arbitrary codimension in hyperbolic
  spaces}. arXiv:1503.06747, 2015.

  \bibitem{Lei-Xu-3} L. Lei and H. W. Xu,\emph{ Mean curvature flow
  of arbitrary codimension in spheres and sharp differentiable sphere
  theorem}. arXiv:1506.06371, 2015.

  \bibitem{Lei-Xu-3-1} L. Lei and H. W. Xu,\emph{ New developments
  in mean curvature flow of arbitrary codimension inspired by Yau rigidity
  theory. }Proceedings of the 7th International Congress of Chinese
  Mathematicians ALM 43, pp. 327-348, 2016.


  \bibitem{LL} A. M. Li and J. M. Li, \textit{An intrinsic rigidity
  theorem for minimal submanifolds in a sphere}, Arch. Math., \textbf{58}
  (1992), 582-594.

  \bibitem{LXYZ} K. F. Liu, H. W, Xu, F. Ye and E. T. Zhao, \emph{Mean
  curvature flow of higher codimension in hyperbolic spaces}. Comm.
  Anal. Geom. \textbf{21} (2013), 651-669.

  \bibitem{LXYZ-2} K. F. Liu, H. W, Xu, F. Ye and E. T. Zhao, \emph{The
  extension and convergence of mean curvature flow in higher codimension}.
  Trans. Amer. Math. Soc. \textbf{370} (2018), 2231-2262.

  \bibitem{LXZ} K. F. Liu, H. W, Xu and E. T. Zhao, \emph{Mean curvature
  flow of higher codimension in Riemannian manifolds}. arXiv:1204.0107,
  2012.

  \bibitem{LXZ-1} K. F. Liu, H. W, Xu and E. T. Zhao, \emph{Deforming
  submanifolds of arbitrary codimension in a sphere}. arXiv:1204.0106.

  \bibitem{LXZ-ICCM} K. F. Liu, H. W, Xu and E. T. Zhao, \emph{Some
  recent results on mean curvature flow of arbitrary codimension}. In
  Proceedings of the Sixth International Congress of Chinese Mathematicians,
  Taipei, July 14-19, 2013, AMS/IP, Studies in Advanced Math.

  \bibitem{LN2017} S. Lynch, H. T. Nguyen, \textit{Pinched ancient
  solutions to the high codimension mean curvature flow}, arXiv:1709.09697.

  \bibitem{MS} J. H. Michael,  L. M. Simon, Sobolev and mean value inequalities on generalized submanifolds
  of $R^n$, Comm. Pure Appl. Math. 26 (1973) 316-379.

  \bibitem{RS2019} S. Risa and C. Sinestrari, \textit{ Ancient solutions
  of geometric flows with curvature pinching}, J. Geom. Anal. \textbf{29}
  (2019), 1206-1232.

  \bibitem{SX} K. Shiohama and H. W. Xu, \emph{The topological sphere
  theorem for complete submanifolds}. Compositio Math. \textbf{107}(1997),
  221-232.

  \bibitem{SX-2} K. Shiohama and H. W. Xu, \emph{Rigidity and sphere
  theorems for submanifolds}. Kyushu J. Math. I, \textbf{48} (1994),
  291-306; II, \textbf{54} (2000), 103-109.

  \bibitem{Simons} J. Simons, Minimal varieties in Riemannian manifolds, \emph{Ann. of Math.}, {\bf 88}(1968),
  62-105.

  \bibitem{XJWang-AnnMath} X. Wang, \emph{Convex solutions to the mean
  curvature flow}, Ann. of Math. (2) \textbf{173} (2011), 1185-1239.

  \bibitem{Xu-PhD} H. W. Xu, \emph{Pinching theorems, global pinching
  theorems, and eigenvalues for Riemannian submanifolds}, Ph.D. dissertation,
  Fudan University, 1990.

  \bibitem{Xu-Arch Math} H. W. Xu, \emph{A rigidity theorem for submanifolds
  with parallel mean curvature in a sphere}. Arch. Math. (Basel) \textbf{61}
  (1993), 489-496.

   \bibitem{Xu-JMSJ}H. W. Xu, \emph{$L_{n/2}$-pinching theorems for submanifolds with parallel mean curvature in a sphere}. J. Math. Soc. Japan, \textbf{46} (1994),  503-515.


   \bibitem{Xu-Gu-CAG}H. W. Xu and J. R. Gu,  \emph{A general gap theorem for submanifolds with parallel mean curvature in $\mathbb{R}^{n+p}$}. Comm. Anal. Geom. \textbf{15} (2007),  175-193.

  \end{thebibliography}
\end{document}